\begin{document}
\input xy
\xyoption{all}

\renewcommand{\mod}{\operatorname{mod}\nolimits}
\newcommand{\proj}{\operatorname{proj.}\nolimits}
\newcommand{\inj}{\operatorname{inj.}\nolimits}
\newcommand{\rad}{\operatorname{rad}\nolimits}
\newcommand{\soc}{\operatorname{soc}\nolimits}
\newcommand{\ind}{\operatorname{inj.dim}\nolimits}
\newcommand{\Ginj}{\operatorname{Ginj}\nolimits}
\newcommand{\Mod}{\operatorname{Mod}\nolimits}
\newcommand{\R}{\operatorname{R}\nolimits}
\newcommand{\End}{\operatorname{End}\nolimits}
\newcommand{\ob}{\operatorname{Ob}\nolimits}
\newcommand{\Ht}{\operatorname{Ht}\nolimits}
\newcommand{\cone}{\operatorname{cone}\nolimits}
\newcommand{\rep}{\operatorname{rep}\nolimits}
\newcommand{\Ext}{\operatorname{Ext}\nolimits}
\newcommand{\Tor}{\operatorname{Tor}\nolimits}
\newcommand{\Hom}{\operatorname{Hom}\nolimits}
\newcommand{\Pic}{\operatorname{Pic}\nolimits}
\newcommand{\Span}{\operatorname{Span}\nolimits}
\newcommand{\Coker}{\operatorname{Coker}\nolimits}
\newcommand{\Div}{\operatorname{Div}\nolimits}
\newcommand{\rank}{\operatorname{rank}\nolimits}
\newcommand{\gldim}{\operatorname{gldim}\nolimits}
\newcommand{\Gproj}{\operatorname{Gproj}\nolimits}
\newcommand{\Char}{\operatorname{char}\nolimits}
\newcommand{\Len}{\operatorname{Length}\nolimits}
\newcommand{\RHom}{\operatorname{RHom}\nolimits}
\renewcommand{\deg}{\operatorname{deg}\nolimits}
\renewcommand{\Im}{\operatorname{Im}\nolimits}
\newcommand{\Ker}{\operatorname{Ker}\nolimits}
\newcommand{\Coh}{\operatorname{Coh}\nolimits}
\newcommand{\Id}{\operatorname{Id}\nolimits}
\newcommand{\Qcoh}{\operatorname{Qch}\nolimits}
\newcommand{\CM}{\operatorname{CM}\nolimits}
\newcommand{\Cp}{\operatorname{Cp}\nolimits}
\newcommand{\coker}{\operatorname{Coker}\nolimits}
\renewcommand{\dim}{\operatorname{dim}\nolimits}
\renewcommand{\div}{\operatorname{div}\nolimits}
\newcommand{\Ab}{{\operatorname{Ab}\nolimits}}
\renewcommand{\Vec}{{\operatorname{Vec}\nolimits}}
\newcommand{\pd}{\operatorname{proj.dim}\nolimits}
\newcommand{\id}{\operatorname{inj.dim}\nolimits}
\newcommand{\Gd}{\operatorname{G.dim}\nolimits}
\newcommand{\sdim}{\operatorname{sdim}\nolimits}
\newcommand{\add}{\operatorname{add}\nolimits}
\newcommand{\pr}{\operatorname{pr}\nolimits}
\newcommand{\oR}{\operatorname{R}\nolimits}
\newcommand{\oL}{\operatorname{L}\nolimits}
\newcommand{\Perf}{{\mathfrak Perf}}
\newcommand{\cc}{{\mathcal C}}
\newcommand{\ce}{{\mathcal E}}
\newcommand{\cs}{{\mathcal S}}
\newcommand{\cf}{{\mathcal F}}
\newcommand{\cx}{{\mathcal X}}
\newcommand{\ct}{{\mathcal T}}
\newcommand{\cu}{{\mathcal U}}
\newcommand{\cv}{{\mathcal V}}
\newcommand{\cn}{{\mathcal N}}
\newcommand{\ch}{{\mathcal H}}
\newcommand{\ca}{{\mathcal A}}
\newcommand{\cb}{{\mathcal B}}
\newcommand{\ci}{{\mathcal I}}
\newcommand{\cj}{{\mathcal J}}
\newcommand{\cm}{{\mathcal M}}
\newcommand{\cp}{{\mathcal P}}
\newcommand{\cg}{{\mathcal G}}
\newcommand{\cw}{{\mathcal W}}
\newcommand{\co}{{\mathcal O}}
\newcommand{\cd}{{\mathcal D}}
\newcommand{\ck}{{\mathcal K}}
\newcommand{\calr}{{\mathcal R}}
\newcommand{\ol}{\overline}
\newcommand{\ul}{\underline}
\newcommand{\st}{[1]}
\newcommand{\ow}{\widetilde}
\renewcommand{\P}{\mathbf{P}}
\newcommand{\pic}{\operatorname{Pic}\nolimits}
\newcommand{\Spec}{\operatorname{Spec}\nolimits}
\newtheorem{theorem}{Theorem}[section]
\newtheorem{acknowledgement}[theorem]{Acknowledgement}
\newtheorem{algorithm}[theorem]{Algorithm}
\newtheorem{axiom}[theorem]{Axiom}
\newtheorem{case}[theorem]{Case}
\newtheorem{claim}[theorem]{Claim}
\newtheorem{conclusion}[theorem]{Conclusion}
\newtheorem{condition}[theorem]{Condition}
\newtheorem{conjecture}[theorem]{Conjecture}
\newtheorem{construction}[theorem]{Construction}
\newtheorem{corollary}[theorem]{Corollary}
\newtheorem{criterion}[theorem]{Criterion}
\newtheorem{definition}[theorem]{Definition}
\newtheorem{example}[theorem]{Example}
\newtheorem{exercise}[theorem]{Exercise}
\newtheorem{lemma}[theorem]{Lemma}
\newtheorem{notation}[theorem]{Notation}
\newtheorem{problem}[theorem]{Problem}
\newtheorem{proposition}[theorem]{Proposition}
\newtheorem{remark}[theorem]{Remark}
\newtheorem{solution}[theorem]{Solution}
\newtheorem{summary}[theorem]{Summary}
\newtheorem*{thm}{Theorem}

\def \bp{{\mathbf p}}
\def \bA{{\mathbf A}}
\def \bL{{\mathbf L}}
\def \bF{{\mathbf F}}
\def \bS{{\mathbf S}}
\def \bC{{\mathbf C}}

\def \Z{{\Bbb Z}}
\def \F{{\Bbb F}}
\def \C{{\Bbb C}}
\def \N{{\Bbb N}}
\def \Q{{\Bbb Q}}
\def \G{{\Bbb G}}
\def \P{{\Bbb P}}
\def \K{{\Bbb K}}
\def \E{{\Bbb E}}
\def \A{{\Bbb A}}
\def \BH{{\Bbb H}}
\def \T{{\Bbb T}}
\newcommand {\lu}[1]{\textcolor{red}{$\clubsuit$: #1}}

\title[]{singularity categories of skewed-gentle algebras}

\author[Chen]{Xinhong Chen}
\address{Department of Mathematics, Southwest Jiaotong University, Chengdu 610031, P.R.China}
\email{chenxinhong@swjtu.edu.cn}

\author[Lu]{Ming Lu$^\dag$}
\address{Department of Mathematics, Sichuan University, Chengdu 610064, P.R.China}
\email{luming@scu.edu.cn}
\thanks{$^\dag$ Corresponding author}

\subjclass[2000]{16G20, 16E35, 18E30}
\keywords{gentle algebras; skewed-gentle algebras; Gorenstein algebras; singularity categories}

\begin{abstract}
Let $K$ be an algebraically closed field. Let $(Q,Sp,I)$ be a skewed-gentle triple, $(Q^{sg},I^{sg})$ and $(Q^g,I^{g})$ be its corresponding skewed-gentle pair and associated gentle pair respectively. It proves that the skewed-gentle algebra $KQ^{sg}/\langle I^{sg}\rangle$ is singularity equivalent to
$KQ/\langle I\rangle$. Moreover, we use $(Q,Sp,I)$ to describe the singularity category of $KQ^g/\langle I^g\rangle$. As a corollary, we get that $\gldim KQ^{sg}/\langle I^{sg}\rangle<\infty$ if and only if
$\gldim KQ/\langle I\rangle<\infty$ if and only if $\gldim KQ^{g}/\langle I^{g}\rangle<\infty$.
\end{abstract}

\maketitle

\section{Introduction}
The singularity category of an algebra is defined to be the Verdier quotient of the bounded derived category with respect to the thick subcategory formed by complexes isomorphic to bounded complexes of finitely generated projective modules \cite{Bu}, see also \cite{Ha1}. Recently, Orlov's global version \cite{Or1} attracted a lot of interest in algebraic geometry and theoretical physics. In particular, the singularity category measures the homological singularity of an algebra \cite{Ha1}: the algebra has finite global dimension if and only if its singularity category is trivial.

A fundamental result of Buchweitz \cite{Bu} and Happel \cite{Ha1} states that for a Gorenstein algebra $A$, the singularity category is triangle equivalent to the stable category of (maximal) Cohen-Macaulay (also called Gorenstein projective) $A$-modules, which generalized Rickard's result \cite{Ri} on self-injective algebras.

As an important class of Gorenstein algebras \cite{GR}, gentle algebras were introduced in \cite{AS} as appropriate context for the investigation of algebras derived equivalent to hereditary algebras of type $\tilde{\A}_n$. Moreover, many important algebras are gentle, such as tilted algebras of type $\A_n$, algebras derived equivalent to $\A_n$-configurations of projective lines \cite{Bur} and also the cluster-tilted algebras of type $\A_n$, $\tilde{\A}_n$ \cite{ABCP}. As a generalization of gentle algebras, skewed-gentle algebras were introduced by Gei{\ss} and de la Pe\~{na} \cite{GdlP}, and they also proved that a skewed-gentle algebra is Morita equivalent to a skew-group algebra of a gentle algebra (which is called \emph{the associated gentle algebra} in this note) with a group of order two. In this way, the skewed-gentle algebras and gentle algebras share many common properties, such as Gorenstein property \cite{GR}, and the derived categories \cite{BM,BMM}, etc..
For singularity categories of gentle algebras, Kalck determines their singularity category by finite products of $n$-cluster categories of type $\A_1$ \cite{Ka}.

The aim of this note is to describe the singularity categories for skewed-gentle algebras, following Kalck's work \cite{Ka}.
In order to state our main results, we need introduce some notations. Let $(Q,Sp,I)$ be a skewed-gentle triple, $(Q^{sg},I^{sg})$ its corresponding skewed-gentle pair, and $(Q^g, I^{g})$ its associated gentle pair. Inspired by \cite{Chen2}, which shows a certain homological epimorphism between two algebras induces a triangle equivalence between their singularity categories, we prove that there is a morphism of this type between the skewed-gentle algebra $KQ^{sg}/\langle I^{sg}\rangle$ and the gentle algebra $KQ/\langle I\rangle$, and so they are singularity equivalent, see Theorem \ref{main theorem}. Besides, with the help of \cite{Ka}, we also use $(Q,Sp,I)$ to describe the singularity categories of the associated gentle algebra $KQ^{g}/\langle I^{g}\rangle$, and then get the relation between it and the singularity category of $KQ^{sg}/\langle I^{sg}\rangle$, see Theorem \ref{main theorem 2}. As a direct corollary of them, we get that
the global dimension of $KQ^{sg}/\langle I^{sg}\rangle$ is finite if and only if the global dimension of $KQ^g/\langle I^{g}\rangle$ is finite, if and only if the global dimension of $KQ/\langle I\rangle$ is finite, without any restriction on the characteristic of the field $K$, see Corollary \ref{corollary 2}.

\vspace{0.2cm} \noindent{\bf Acknowledgments.}
This work is inspired by some discussions with Changjian Fu. The first author(X. Chen) was supported by the Fundamental Research Funds for the Central Universities A0920502051411-45.
The second author(M. Lu) was supported by the National Natural Science Foundation of China (No. 11401401).

\section{Preliminaries}
Throughout this note, we always assume that $K$ is an algebraically closed field. For any finite set $S$, we denote by $\sharp (S)$ the number of the elements in $S$. For any algebra $A$, we denote by $\gldim A$ its global dimension.

Let $Q$ be a quiver and $\langle I\rangle$ an admissible ideal in the path algebra $KQ$ which is generated by a set of relations $I$. Denote by $(Q,I)$ the \emph{associated bound quiver}. For any arrow $\alpha$ in $Q$ we denote by $s(\alpha)$ its starting vertex and by $t(\alpha)$ its ending vertex. An \emph{oriented path} (or path for short) $p$ in $Q$ is a sequence $p=\alpha_1\alpha_2\dots \alpha_r$ of arrows $\alpha_i$ such that $t(\alpha_i)=s(\alpha_{i-1})$ for all $i=2,\dots,r$. For any two paths $p_1,p_2$ in $(Q,I)$, we denote by $p_1\sim p_2$ the relation of $p_1-p_2\in \langle I\rangle$.
\subsection{Gentle algebras}
We first recall the definition of special biserial algebras and of gentle algebras.
\begin{definition}[\cite{SW}]
The pair $(Q,I)$ is called \emph{special biserial} if it satisfies the following conditions.
\begin{itemize}
\item Each vertex of $Q$ is starting point of at most two arrows, and end point of at most two arrows.
\item For each arrow $\alpha$ in $Q$ there is at most one arrow $\beta$ such that $\alpha\beta\notin I$, and at most one arrow $\gamma$ such that $\gamma\alpha\notin I$.
\end{itemize}
\end{definition}
\begin{definition}[\cite{AS}]
The pair $(Q,I)$ is called \emph{gentle} if it is special biserial and moreover the following holds.
\begin{itemize}
\item The set $I$ is generated by zero-relations of length $2$.
\item For each arrow $\alpha$ in $Q$ there is at most one arrow $\beta$ with $t(\beta)=s(\alpha)$ such that $\alpha\beta\in I$, and at most one arrow $\gamma$ with $s(\gamma)=t(\alpha)$ such that $\gamma\alpha\in I$.
\end{itemize}

\end{definition}
A finite dimensional algebra $A$ is called \emph{special biserial} (resp., \emph{gentle}), if it has a presentation as $A=KQ/\langle I\rangle$ where $(Q,I)$ is special biserial (resp., gentle).

\subsection{Skewed-gentle algebras}

Skewed-gentle algebras were introduced in \cite{GdlP}; for the notion and notation we follow here mostly \cite{BMM}.

Let $(Q,I)$ be a gentle pair. Let $Sp$ be a subset of vertices of the quiver $Q$ and the elements of $Sp$ are called to be \emph{special vertices}, the remaining vertices are called \emph{ordinary}.

For a triple $(Q,Sp,I)$, let us consider the pair $(Q^{sp},I^{sp})$, where $Q_0^{sp}:=Q_0$, $Q_1^{sp}:=Q_1\cup\{\alpha_i|i\in Sp,s(\alpha_i)=t(\alpha_i)=i\}$ and $I^{sp}:=I\cup\{\alpha_i^2|i\in Sp\}$.

\begin{definition}
A triple $(Q,Sp,I)$ as above is called \emph{skewed-gentle} if the corresponding pair $(Q^{sp},I^{sp})$ is gentle.
\end{definition}

For any vertex in a quiver $Q$, its \emph{valency} is defined as the number of arrows attached to it, i.e., the number of incoming arrows plus the number of outgoing arrows(note that in particular any loop contributes twice to the valency).

In fact, Bessenrodt and Holm pointed out that the admissibility of the set $Sp$ of special vertices is both a local as well as a global condition \cite{BH}. Let $v$ be a vertex in the gentle quiver $(Q,I)$; then we can only add a loop at $v$ if $v$ is of valency $1$ or $0$ or if it is of valency $2$ with a zero relation, but not one coming from a loop. Furthermore, for the choice of an admissible set of special vertices we also have to take care of the global condition
that after adding all loops, the pair $(Q^{sp},I^{sp})$ still does not have paths of arbitrary lengths.

\begin{example}\label{example 1}
(a) Let $(Q,I)$ be the bound quiver as the following diagram shows. Then $(Q,I)$ is gentle. In order to make $(Q,Sp,I)$ to be skewed-gentle, the set $Sp$ can only be $\{1\}$, $\{2\}$ or the empty set.
\[ \xymatrix{\circ^1\ar@<0.5ex>[r]^{\alpha} &   \circ^2 \ar@<0.5ex>[l]^{\beta} &I=\{\alpha\beta,\beta\alpha\}
}\]

(b) Let $(Q,I)$ be the bound quiver as the following diagram shows. Then $(Q,I)$ is gentle. In order to make $(Q,Sp,I)$ to be skewed-gentle, the set $Sp$ can only be $\{1\}$, $\{2\}$, $\{3\}$, $\{1,2\}$, $\{2,3\}$, $\{1,3\}$ or the empty set.
\[ \xymatrix{&\circ^3\ar[dl]_\gamma&&\\
\circ^1\ar[rr]_{\alpha} &  & \circ^2 \ar[ul]_{\beta} &I=\{\alpha\gamma,\beta\alpha,\gamma\beta\}
}\]
\end{example}

Let $(Q,Sp,I)$ be a skewed-gentle triple. We associate to each vertex $i\in Q_0$ a set, denoted by $Q_0(i)$, on the following way: If $i$ is an ordinary vertex then $Q_0(i)=\{i\}$; if $i$ is special then $Q_0(i)=\{i^-,i^+\}$.
We denote by $(Q^{sg},I^{sg})$ the pair defined in the following:
\begin{eqnarray*}
&&Q_0^{sg}:=\bigcup_{i\in Q_0}Q_0(i),\\
&&Q_1^{sg}[a,b]:=\{(a,\alpha,b)|\alpha\in Q_1,a\in Q_0(s(\alpha)),b\in Q_0(t(\alpha))\},\\
&&I^{sg}:=\{\sum_{b\in Q_0(s(\alpha))} \lambda_b(b,\alpha,c)(a,\beta,b)|\alpha\beta\in I,a\in Q_0(s(\beta)),c\in Q_0(t(\alpha))\},
\end{eqnarray*}
where $\lambda_b=-1$ if $b=i^-$ for some $i\in Q_0$, and $\lambda_b=1$ otherwise.

Note that the relations in $I^{sg}$ are zero relations or commutative relations.

\begin{definition}[\cite{GdlP}]
A $K$-algebra $A$ is called skewed-gentle, if it is Morita equivalent to a factor algebra $KQ^{sg}/\langle I^{sg}\rangle$, where the triple $(Q,Sp,I)$ is skewed-gentle. The corresponding pair $(Q^{sg},I^{sg})$ is also said to be skewed-gentle.
\end{definition}

\begin{example}\label{example 2}
(a) Keep the notations as in Example \ref{example 1} (a). If $Sp=\{2\}$, then $(Q^{sg},I^{sg})$ is as the following shows.

\[ \xymatrix{\circ^{2^-} \ar@<0.5ex>[r]^{\beta^-} & \circ^1\ar@<0.5ex>[r]^{\alpha^+}\ar@<0.5ex>[l]^{\alpha^-} &   \circ^{2^+} \ar@<0.5ex>[l]^{\beta^+} &I^{sg}=\{\alpha^\pm\beta^\pm,\beta^+\alpha^+-\beta^-\alpha^-\}
}\]

(b) Keep the notations as in Example \ref{example 1} (b). If $Sp=\{3\}$, then $(Q^{sg},I^{sg})$ is as the following shows.
\[ \xymatrix{&\circ^{3^+}\ar[dl]_{\gamma^+}&&\\
\circ^1\ar[rr]_{\alpha} &  & \circ^2 \ar[ul]_{\beta^+} \ar[dl]^{\beta^-} &I^{sg}=\{\alpha\gamma^{\pm},\beta^{\pm}\alpha,\gamma^+\beta^+-\gamma^-\beta^-\}\\
&\circ^{3^-}\ar[ul]^{\gamma^-} &&
}\]
\end{example}

\subsection{Skew-group algebras}
It follows from Gei{\ss} and de la Pe\~{n}a\cite{GdlP} that for any skewed-gentle triple $(Q,Sp,I)$, the corresponding skewed-gentle algebra $KQ^{sg}/\langle I^{sg}\rangle$ is Morita-equivalent to a skew-group
algebra $BG$ in the case of $\mathrm{char} K\neq2$, where $B$ is a gentle algebra and $G$ is a finite group, see also \cite{BMM}.
We recall it in the following precisely.

Let $A$ be a $K$-algebra, and $G$ a finite group acting on $A$ via $K$-linear automorphisms. The \emph{skew-group algebra} $AG$ is the vector space $\oplus_{g\in G}A[g]$ with multiplication induced by
$$a[g]b[h]:=ag(b)[gh].$$

Let $(Q,Sp,I)$ be a skewed-gentle triple. For a given special (resp., ordinary) vertex $i$, let us denote by $Q_0[i]$ the set $\{i\}$ (resp., $\{i^-,i^+\}$). Consider the pair $(Q^g,I^g)$, where $Q_0^g:=\cup_{i\in Q_0}Q_0[i]$, $Q_1^g:=\{\alpha^+,\alpha^-|\alpha\in Q_1\}$,
$$s(\alpha^{\pm}):=\left\{ \begin{array}{cc} s(\alpha)^{\pm}, &\mbox{if } s(\alpha)\notin Sp;\\
s(\alpha), &\mbox{if } s(\alpha)\in Sp,\end{array}
 \right.\quad\quad t(\alpha^{\pm}):=\left\{ \begin{array}{cc} t(\alpha)^{\pm}, &\mbox{if } t(\alpha)\notin Sp;\\
t(\alpha), &\mbox{if } t(\alpha)\in Sp
 \end{array}\right.$$
and $$I^g:=\{\beta^+\alpha^+,\beta^-\alpha^-|\beta\alpha\in I,t(\alpha)\notin Sp\}\bigcup \{\beta^+\alpha^-,\beta^-\alpha^+|\beta\alpha\in I,t(\alpha)\in Sp\}.$$
It follows from \cite{GdlP} that the algebra $B:=KQ^{g}/\langle I^g\rangle$ is gentle. We call $(Q^g,I^g)$ (resp., $ KQ^g/\langle I^g\rangle$) the \emph{associated gentle pair} (resp., \emph{associated gentle algebra}) of $(Q,Sp,I)$ or $KQ^{sg}/\langle I^{sg}\rangle$.

Consider the group $G=\{e,g|g^2=e\}$ which acts on $B$ defined by the rule:
$$g(i^{\pm}):=i^{\mp},\quad g(j):=j,\quad g(\alpha^{\pm}):=\alpha^{\mp}$$
for all $i\in Q_0\setminus Sp$, $j\in Sp$ and $\alpha\in Q_1$. Then we get the skew-group algebra $BG$.
\begin{example}\label{example 3}
(a) Keep the notations as in Example \ref{example 2} (a). If $Sp=\{2\}$, then $(Q^{g},I^{g})$ is as the following shows.

\[ \xymatrix{\circ^{1^+} \ar@<0.5ex>[r]^{\alpha^+} & \circ^2\ar@<0.5ex>[r]^{\beta^-}\ar@<0.5ex>[l]^{\beta^+} &   \circ^{1^-} \ar@<0.5ex>[l]^{\alpha^-} &I^{g}=\{\alpha^+\beta^+,\alpha^-\beta^-,\beta^+\alpha^-,\beta^-\alpha^+\}
}\]

(b) Keep the notations as in Example \ref{example 2} (b). If $Sp=\{3\}$, then $(Q^{g},I^{g})$ is as the following shows.
\[ \xymatrix{
\circ^{1^-}\ar[rr]^{\alpha^-}&&\circ^{2^-}\ar[dl]^{\beta^-}\\
&\circ^{3}\ar[dl]_{\gamma^+}\ar[ul]_{\gamma^-}&&I^g=\{\alpha^+\gamma^{+},\alpha^-\gamma^{-},\beta^{+}\alpha^+,\beta^-\alpha^-,\gamma^-\beta^+,
\gamma^+\beta^-\}\\
\circ^{1^+}\ar[rr]_{\alpha^+} &  & \circ^{2^+} \ar[ul]_{\beta^+} &
}\]

\end{example}

\subsection{Singularity categories and Gorenstein algebras}
Let $\Gamma$ be a finite-dimensional $K$-algebra. Let $\mod \Gamma$ be the category of finitely generated left $\Gamma$-modules. By $D=\Hom_K(-,K)$ we denote the standard duality with respect to the ground field. Then $_\Gamma D(\Gamma_\Gamma)$ is an injective cogenerator for $\mod \Gamma$. For an arbitrary $\Gamma$-module $_\Gamma X$, we denote by $\pd_\Gamma X$ (resp. $\id_\Gamma X$) the projective dimension (resp. the injective dimension) of the module $_\Gamma X$. A $\Gamma$-module $G$ is \emph{Gorenstein projective}, if there is an exact sequence $$P^\bullet:\cdots \rightarrow P^{-1}\rightarrow P^0\xrightarrow{d^0}P^1\rightarrow \cdots$$ of projective $\Gamma$-modules, which stays exact under $\Hom_\Gamma(-,\Gamma)$, and such that $G\cong \Ker d^0$. We denote by $\Gproj(\Gamma)$ the subcategory of Gorenstein projective $\Gamma$-modules.

\begin{definition}[\cite{AR1,AR2,Ha1}]
A finite dimensional algebra $\Gamma$ is called a Gorenstein algebra if $\Gamma$ satisfies $\pd_\Gamma D(\Gamma_\Gamma)<\infty$ and $\id_\Gamma \Gamma<\infty$.
\end{definition}

For an algebra $\Gamma$, the \emph{singularity category} of $\Gamma$ is defined to be the quotient category $D_{sg}^b(\Gamma):=D^b(\Gamma)/K^b(\proj \Gamma)$ \cite{Bu,Ha1,Or1}. Note that $D_{sg}^b(\Gamma)$ is zero if and only if $\gldim \Gamma<\infty$ \cite{Ha1}.
For any two algebras, if their singularity categories are equivalent, then we call them to be \emph{singularity equivalent}.

\begin{theorem}[\cite{Bu,Ha1}]
Let $\Gamma$ be a Gorenstein algebra. Then $\Gproj (\Gamma)$ is a Frobenius category with the projective modules as the projective-injective objects. The stable category $\underline{\Gproj}(\Gamma)$ is triangle equivalent to the singularity category $D^b_{sg}(\Gamma)$ of $\Gamma$.
\end{theorem}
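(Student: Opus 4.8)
The plan is to treat the two assertions in turn: first the Frobenius structure on $\Gproj(\Gamma)$, and then the triangle equivalence $\underline{\Gproj}(\Gamma)\simeq D^b_{sg}(\Gamma)$. Throughout, the one computation I would isolate at the outset is the vanishing $\Ext^i_\Gamma(G,P)=0$ for every Gorenstein projective $G$, every projective $P$, and every $i\geq 1$: this is immediate from the definition, since splicing the complete resolution $P^\bullet$ of $G$ and applying $\Hom_\Gamma(-,\Gamma)$ keeps it exact, and $P$ is a summand of a free module.

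First I would equip $\Gproj(\Gamma)$ with the exact structure it inherits as an extension-closed full subcategory of $\mod\Gamma$, so that the conflations are exactly the short exact sequences of $\Gamma$-modules whose three terms are all Gorenstein projective. From the $\Ext$-vanishing above one checks directly that every projective $\Gamma$-module is an injective object of this exact category (applying $\Hom_\Gamma(-,P)$ to a conflation, the connecting map lands in $\Ext^1_\Gamma(-,P)=0$, so morphisms into $P$ extend), and it is trivially a projective object (projectives lift along surjections in $\mod\Gamma$). Cutting the complete resolution of $G=\Ker d^0$ at $d^{-1}$ and at $d^0$ yields conflations $0\to \Omega G\to P^{-1}\to G\to 0$ and $0\to G\to P^0\to \Omega^{-1}G\to 0$ with $P^{-1},P^0$ projective and $\Omega^{\pm1}G$ again Gorenstein projective; these show that $\Gproj(\Gamma)$ has enough projective-injective objects, hence is Frobenius. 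Conversely, if $X\in\Gproj(\Gamma)$ is an injective object, the cosyzygy conflation $0\to X\to P^0\to\Omega^{-1}X\to 0$ splits, so $X$ is a summand of the projective module $P^0$; this identifies the projective-injective objects of $\Gproj(\Gamma)$ precisely with the projective modules.

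For the equivalence, I would build the comparison functor as the composite $\Gproj(\Gamma)\hookrightarrow\mod\Gamma\to D^b(\Gamma)\to D^b_{sg}(\Gamma)$. Since projectives become zero in $D^b_{sg}(\Gamma)=D^b(\Gamma)/K^b(\proj\Gamma)$, this functor annihilates the ideal of maps factoring through projectives and so descends to $\bar F\colon\underline{\Gproj}(\Gamma)\to D^b_{sg}(\Gamma)$. I would then verify that $\bar F$ is a triangle functor: in $D^b_{sg}(\Gamma)$ the cosyzygy conflation $0\to G\to P^0\to\Omega^{-1}G\to 0$ gives an isomorphism $\Omega^{-1}G\cong G[1]$ (as $P^0$ vanishes there), compatibly with suspensions, and every standard triangle of the Frobenius stable category arises from a conflation and is carried to a distinguished triangle. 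Full faithfulness I would get by matching Hom-sets: the syzygy $\Omega$ is an autoequivalence of $\underline{\Gproj}(\Gamma)$, so $\underline{\Hom}_\Gamma(G,G')\cong\varinjlim_n\underline{\Hom}_\Gamma(\Omega^nG,\Omega^nG')$ is already constant, and comparing this with the calculus-of-fractions description of $\Hom_{D^b_{sg}(\Gamma)}(G,G')$ as a colimit over syzygies shows the map induced by $\bar F$ is bijective.

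The remaining point, density, is where the Gorenstein hypothesis is indispensable, and I expect it to be the main obstacle. I would first reduce, using truncation triangles in $D^b(\Gamma)$ together with the vanishing of finite-projective-dimension complexes in $D^b_{sg}(\Gamma)$, to the fact that every object of $D^b_{sg}(\Gamma)$ is isomorphic to $M[n]$ for some $\Gamma$-module $M$ and some $n\in\Z$. Then, because $\Gamma$ is Gorenstein, it has finite self-injective dimension $d$ (using $\id_\Gamma\Gamma<\infty$), so the $d$-th syzygy $\Omega^dM$ is Gorenstein projective, while iterating the isomorphism $M\cong\Omega M[1]$ in $D^b_{sg}(\Gamma)$ gives $M\cong(\Omega^dM)[d]$. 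Hence $M[n]\cong\bar F(\Omega^dM)[d+n]$ lies in the essential image of $\bar F$, which is closed under the suspension. The delicate bookkeeping throughout is to keep the shift of the triangulated category $D^b_{sg}(\Gamma)$ synchronized with the cosyzygy suspension of the Frobenius stable category, and to control syzygies of modules that are not yet Gorenstein projective until the Gorenstein dimension $d$ is reached; once this is done, $\bar F$ is an equivalence of triangulated categories.
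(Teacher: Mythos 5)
The paper does not actually prove this statement: it appears there as a quoted theorem, attributed to Buchweitz and Happel, and is used as a black box in the rest of the article. So the only meaningful comparison is with the proofs in the cited sources, and against those your outline follows the classical route: the vanishing $\Ext^i_\Gamma(G,P)=0$, the Frobenius exact structure on $\Gproj(\Gamma)$ with the projective modules as projective-injectives, the functor $\underline{\Gproj}(\Gamma)\to D^b_{sg}(\Gamma)$ induced by the inclusion, full faithfulness by comparing stable Hom-sets with Hom-sets in the Verdier quotient, and density by reducing to shifted modules and taking $d$-th syzygies. This is structurally the standard (Buchweitz) proof, and the architecture is correct.

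That said, three steps you present as immediate are themselves theorem-level, and a complete write-up would have to prove or cite them. First, extension-closure of $\Gproj(\Gamma)$ in $\mod\Gamma$, which you need before you can even speak of the inherited exact structure. Second, the formula $\Hom_{D^b_{sg}(\Gamma)}(M,N)\cong\varinjlim_n\underline{\Hom}_\Gamma(\Omega^nM,\Omega^nN)$: this is true for any finite dimensional algebra, but it carries essentially all the content of full faithfulness (its proof, by resolving roofs in the Verdier quotient, or via the stabilization of the left triangulated category $\underline{\mod}\,\Gamma$, is not shorter than a direct argument), so invoking it as "the calculus-of-fractions description" understates what must be checked. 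Third, and most seriously, the claim that $\Omega^dM$ is Gorenstein projective when $\Gamma$ is Gorenstein: in the noncommutative setting Gorenstein projectivity requires a two-sided complete resolution, and the left half (the projective resolution of $\Omega^dM$) is free; the crux is constructing the right half. One shows $\Ext^{>0}_\Gamma(\Omega^dM,\Gamma)=0$ from $\id_\Gamma\Gamma=d$, but then must prove that this vanishing, together with $\id\Gamma_\Gamma<\infty$ (equivalently $\pd_\Gamma D(\Gamma_\Gamma)<\infty$, the other half of the paper's Gorenstein condition, which your sketch never uses), forces $\Omega^dM$ to be reflexive with $\Ext^{>0}(\Hom_\Gamma(\Omega^dM,\Gamma),\Gamma)=0$, so that dualizing a projective resolution of $\Hom_\Gamma(\Omega^dM,\Gamma)$ supplies the coresolution half. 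This is exactly where the Gorenstein hypothesis is consumed, and compressing it into the single clause "so the $d$-th syzygy $\Omega^dM$ is Gorenstein projective" hides the one genuinely hard point of the theorem. With these three inputs supplied (all standard in the literature), your proof is complete.
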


Gei{\ss} and Reiten \cite{GR} have shown that gentle algebras are Gorenstein algebras. Since the property of being Gorenstein is also preserved under the skew-group
ring construction with a finite group whose order is invertible in $K$, see \cite{RR,AR2}, Gei{\ss} and Reiten also pointed out that skewed-gentle algebras are Gorenstein algebras in the case of $\mathrm{char}K\neq2$ \cite{GR}.

\section{The first main theorem}

In order to prove the first main result, we describe a construction of matrix algebras which is obtained by X-W Chen in \cite{Chen2}, see also \cite[Section 4]{KN}. Let $A$ be a finite dimensional algebra over a field $K$. Let $_AM$ and $N_A$ be a left and right $A$-module, respectively. Then $M\otimes_KN$ becomes an $A\mbox{-}A$-bimodule. Consider an $A\mbox{-}A$-bimodule monomorphism $\phi:M\otimes_K N\rightarrow A$ such that $\Im\phi$ vanishes both on $M$ and $N$. Note that $\Im \phi\subseteq A$ is an ideal. Then the matrix $\Gamma=\left( \begin{array}{cc} A &M\\ N& K\end{array}\right)$ becomes an associative algebra via the following multiplication
$$\left( \begin{array}{cc} a &m\\ n& \lambda\end{array}\right)\left( \begin{array}{cc} a' &m'\\ n'& \lambda'\end{array}\right)=\left( \begin{array}{cc} aa'+\phi(m\otimes n) &am'+\lambda'm\\ na'+\lambda n'& \lambda\lambda'\end{array}\right).$$
\begin{proposition}[\cite{Chen2}]\label{proposition homological epimorphism induces singularity equivalences}
Keep the notation and assumption as above. Then there is a triangle equivalence $D^b_{sg}(\Gamma)\simeq D^b_{sg}(A/\Im\phi)$.
\end{proposition}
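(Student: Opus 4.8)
The plan is to exhibit the canonical surjection $\pi\colon\Gamma\to A/\Im\phi$ as a homological epimorphism whose kernel has finite projective dimension on each side, and then to run the standard mechanism by which such a map induces a triangle equivalence of singularity categories. First I would introduce the two diagonal idempotents $e=\left(\begin{smallmatrix}1&0\\0&0\end{smallmatrix}\right)$ and $f=\left(\begin{smallmatrix}0&0\\0&1\end{smallmatrix}\right)$ of $\Gamma$, so that $e\Gamma e\cong A$, $f\Gamma f\cong K$, $e\Gamma f\cong M$ and $f\Gamma e\cong N$. A direct computation with the given multiplication yields $\Gamma f\Gamma=\left(\begin{smallmatrix}\Im\phi&M\\N&K\end{smallmatrix}\right)$, so that $\Gamma/\Gamma f\Gamma\cong A/\Im\phi$; write $\ol A$ for this quotient and $\pi$ for the projection.

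The technical heart is an explicit length-one projective resolution of ${}_\Gamma\ol A$. Writing $P_e=\Gamma e$ and $P_f=\Gamma f$ for the indecomposable projectives, the evident surjection $P_e\to\ol A$ has kernel $\left(\begin{smallmatrix}\Im\phi\\N\end{smallmatrix}\right)$, and I would check that a choice of $K$-basis $y_1,\dots,y_d$ of $N$ (here $d=\dim_K N<\infty$) produces a map $P_f^{\oplus d}\to\left(\begin{smallmatrix}\Im\phi\\N\end{smallmatrix}\right)$, $\left(\begin{smallmatrix}m_i\\\mu_i\end{smallmatrix}\right)_i\mapsto\left(\begin{smallmatrix}\sum_i\phi(m_i\otimes y_i)\\\sum_i\mu_i y_i\end{smallmatrix}\right)$, which is an isomorphism. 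Both hypotheses enter decisively at exactly this point, and this is the step I expect to be the main obstacle: the vanishing of $\Im\phi$ on $M$ and $N$ is what collapses the $\Gamma$-action enough for this to be a $\Gamma$-module homomorphism, while the injectivity of $\phi$ is what forces the kernel to vanish, since $\sum_i\phi(m_i\otimes y_i)=\phi(\sum_i m_i\otimes y_i)$ and the basis $\{y_i\}$ identify a kernel element with a vanishing element of $M\otimes_K N$. This gives a short exact sequence
\[
0\longrightarrow P_f^{\oplus d}\longrightarrow P_e\longrightarrow \ol A\longrightarrow 0,
\]
so $\pd{}_\Gamma\ol A\le 1$ and $\Gamma f\Gamma$ is projective as a left module; the symmetric argument gives $\pd\ol A_\Gamma\le 1$. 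Applying $\ol A\otimes_\Gamma-$ and using $\ol A f=0$ annihilates the $P_f$-term, which yields $\Tor_i^\Gamma(\ol A,\ol A)=0$ for $i>0$ together with $\ol A\otimes_\Gamma\ol A\cong\ol A$; hence $\pi$ is a homological epimorphism.

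Finally I would descend to the singularity categories. Since $\ol A$ has finite projective dimension on both sides, the derived functor $\ol A\otimes_\Gamma^{\bL}-\colon D^b(\Gamma)\to D^b(\ol A)$ is defined on bounded complexes and carries $K^b(\proj\Gamma)$ into $K^b(\proj\ol A)$, while restriction of scalars $\pi_*\colon D^b(\ol A)\to D^b(\Gamma)$ sends perfect complexes to perfect complexes because ${}_\Gamma\ol A$ is perfect; thus both functors pass to the Verdier quotients $D^b_{sg}$. The homological epimorphism property gives a natural isomorphism $(\ol A\otimes_\Gamma^{\bL}-)\circ\pi_*\cong\Id$, so one composite is already the identity. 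For the other I would examine the unit $\eta_X\colon X\to\pi_*(\ol A\otimes_\Gamma^{\bL}X)$: on the generator $\Gamma$ it is $\pi$ itself, with cone $\Gamma f\Gamma[1]$, which is perfect by the previous paragraph, and since $\{X:\cone(\eta_X)\in K^b(\proj\Gamma)\}$ is a thick subcategory containing $\Gamma$ it must equal $D^b(\Gamma)$; hence $\eta_X$ becomes invertible in $D^b_{sg}(\Gamma)$. Controlling this cone is the second delicate point, but once it is settled the two induced functors are mutually inverse, giving $D^b_{sg}(\Gamma)\simeq D^b_{sg}(A/\Im\phi)$ as claimed.
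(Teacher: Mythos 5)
Your setup and the first two-thirds of the argument are sound, and they follow the intended route (the paper itself imports this proposition from \cite{Chen2} without proof, so what you are reconstructing is Chen's argument): the identification $\Gamma f\Gamma=\left(\begin{smallmatrix}\Im\phi&M\\ N&K\end{smallmatrix}\right)$, the explicit resolution $0\to P_f^{\oplus d}\to P_e\to \ol A\to 0$ (where the two hypotheses on $\phi$ enter exactly as you say), the conclusion that $\pi$ is a homological epimorphism with $\Gamma f\Gamma$ projective on both sides, and the descent of both functors to the Verdier quotients are all correct. The gap is in the last step. You claim that $\{X\in D^b(\Gamma):\cone(\eta_X)\in K^b(\proj\Gamma)\}$ is thick and contains $\Gamma$, ``hence equals $D^b(\Gamma)$.'' That inference is false: the thick subcategory of $D^b(\Gamma)$ generated by $\Gamma$ is precisely $K^b(\proj\Gamma)$, which is a \emph{proper} subcategory whenever $\gldim\Gamma=\infty$ --- exactly the situation the proposition is meant to address. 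So your argument only shows that $\cone(\eta_X)$ is perfect when $X$ is already perfect, which is trivial and says nothing about the unit on general objects of $D^b(\Gamma)$; this is precisely where the real content of the theorem lies.

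The gap can be closed, but it requires the bimodule structure of the kernel rather than a generation argument. Applying $-\otimes^{\bL}_\Gamma X$ to the exact sequence of $\Gamma$-bimodules $0\to\Gamma f\Gamma\to\Gamma\to\ol A\to 0$ identifies $\cone(\eta_X)\cong \Gamma f\Gamma\otimes^{\bL}_\Gamma X[1]$ for every $X\in D^b(\Gamma)$. Now observe that $f\Gamma f\cong K$ and that the multiplication map $\Gamma f\otimes_K f\Gamma\to\Gamma f\Gamma$ is a surjective bimodule map which is bijective by a dimension count: injectivity of $\phi$ gives $\dim_K\Gamma f\Gamma=\dim_KM\cdot\dim_KN+\dim_KM+\dim_KN+1=(\dim_KM+1)(\dim_KN+1)$. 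Hence $\Gamma f\Gamma\cong\Gamma f\otimes_K f\Gamma$ is a projective $\Gamma$-bimodule (a direct summand of $\Gamma\otimes_K\Gamma$). Since $f\Gamma$ is projective as a right $\Gamma$-module, one gets $\Gamma f\Gamma\otimes^{\bL}_\Gamma X\cong\Gamma f\otimes_K fX$, a bounded complex each of whose terms is a finite direct sum of copies of $P_f$; so $\cone(\eta_X)$ is perfect for \emph{every} $X\in D^b(\Gamma)$, the unit becomes invertible in $D^b_{sg}(\Gamma)$, and the equivalence follows. This computation --- that $\Gamma f\Gamma$ is a stratifying ideal in the sense of \cite{KN}, projective on both sides --- is the missing key step; without it your proof establishes the two functors and one of the two natural isomorphisms, but not the equivalence.
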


Note that the above construction contains the one-point extension and one-point coextension of algebras, where $M$ or $N$ is zero.

The following two lemmas are crucial to the proof of our first main theorem.

\begin{lemma}\label{lemma 1}
Let $(Q,Sp,I)$ be a skewed-gentle triple and $(Q^{sg},I^{sg})$ its corresponding skewed-gentle pair. Then for any paths $p_1=\alpha_1\dots \alpha_n,p_2=\alpha_{n+1}\dots\alpha_{n+m}$ with $p_1,p_2\notin \langle I^{sg}\rangle$ and $\alpha_n\alpha_{n+1}\notin \langle I^{sg}\rangle$, we get that $p_1p_2\notin \langle I^{sg}\rangle$.
\end{lemma}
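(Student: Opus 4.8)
The plan is to reduce the statement to a clean combinatorial criterion for when a single path represents a nonzero element of $KQ^{sg}/\langle I^{sg}\rangle$, and then to read off the conclusion from the hypotheses by contraposition. First I would analyze the generators of $\langle I^{sg}\rangle$ using the explicit description of $I^{sg}$. Fix a relation $\alpha\beta\in I$ with middle vertex $v=t(\beta)=s(\alpha)$. If $v$ is ordinary, then $Q_0(v)$ is a singleton and the corresponding element of $I^{sg}$ is the single length-two path $(v,\alpha,c)(a,\beta,v)$, a genuine zero relation. If $v$ is special, then $Q_0(v)=\{v^-,v^+\}$ and the element of $I^{sg}$ is the commutativity relation $(v^+,\alpha,c)(a,\beta,v^+)-(v^-,\alpha,c)(a,\beta,v^-)$; the two ``mixed'' products fail to compose, so the only length-two paths through $v$ arising from $\alpha\beta$ are these two, and they differ only in the sign at the middle vertex. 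Thus every generator of $\langle I^{sg}\rangle$ is either a monomial zero relation (occurring only at ordinary middle vertices) or a binomial commutativity relation (occurring only at special middle vertices), and in both cases it has length two.

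The central claim I would establish is: a path $p=\alpha_1\cdots\alpha_\ell$ in $Q^{sg}$ lies in $\langle I^{sg}\rangle$ if and only if some consecutive pair $\alpha_i\alpha_{i+1}$ is a zero relation. The ``if'' direction is immediate, since such a $p$ has a generator of $\langle I^{sg}\rangle$ as a factor. For the ``only if'' direction I would orient the relations into a reduction system, sending each zero relation to $0$ and each commutativity relation $(v^+,\alpha,c)(a,\beta,v^+)\mapsto(v^-,\alpha,c)(a,\beta,v^-)$, and show it is confluent so that its normal forms give a $K$-basis (a Bergman diamond-lemma argument). The decisive observation is that a commutativity rewrite changes only the sign at one special middle vertex, leaving the source of the preceding arrow and the target of the following arrow unchanged; hence it does not alter whether any consecutive pair of the path is a zero relation. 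Consequently, if $p$ has no zero-relation pair, then no sequence of rewrites can ever produce one, so $p$ reduces to a nonzero normal form and is nonzero. The only overlaps requiring verification are length-three configurations $\alpha_i\alpha_{i+1}\alpha_{i+2}$ in which both adjacent pairs are reducible; these act on distinct middle vertices and, by the gentle conditions on $(Q,I)$, occur in only finitely many shapes, each of which is readily seen to be confluent.

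With the claim in hand the lemma follows by contraposition. Suppose $p_1p_2=\alpha_1\cdots\alpha_{n+m}\in\langle I^{sg}\rangle$. Then some consecutive pair $\alpha_i\alpha_{i+1}$ is a zero relation. If $i+1\le n$, this pair lies inside $p_1$, forcing $p_1\in\langle I^{sg}\rangle$; if $i\ge n+1$, it lies inside $p_2$, forcing $p_2\in\langle I^{sg}\rangle$; and if $i=n$, it is precisely the junction $\alpha_n\alpha_{n+1}$, forcing $\alpha_n\alpha_{n+1}\in\langle I^{sg}\rangle$. Each possibility contradicts a hypothesis, so $p_1p_2\notin\langle I^{sg}\rangle$.

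I expect the main obstacle to be the ``only if'' direction of the central claim, namely verifying that the commutativity relations cannot conspire, through a chain of rewrites, to expose a hidden zero relation in a path that has none to begin with. The invariance of the zero-relation pattern under a single commutativity rewrite is what rules this out, but converting that observation into a clean confluence and basis statement, together with the check of the finitely many overlap types permitted by the gentle axioms, is the technical heart of the argument; everything else is bookkeeping.
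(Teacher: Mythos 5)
Your proposal is correct and takes essentially the same route as the paper: both arguments rewrite paths at special vertices via the commutativity relations, both hinge on the key invariance that such a rewrite (changing only the decoration at one special middle vertex, not the outer endpoints or underlying arrows) can never create or destroy a zero-relation pair, and both finish by locating any zero-relation pair inside $p_1$, inside $p_2$, or at the junction $\alpha_n\alpha_{n+1}$. The only difference is packaging: where the paper introduces an ad hoc operator $\Phi$ and simply asserts that membership of a path in $\langle I^{sg}\rangle$ must be witnessed by a finite sequence of rewrites exposing a zero relation, you justify that step via a confluent reduction system and the diamond lemma, which if anything makes the argument more complete.
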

\begin{proof}

We denote this path as the following diagram shows:
\[\xymatrix{\circ^{a_1}  &\circ^{a_2}\ar[l]_{\alpha_1} &\cdots \ar[l]_{\alpha_2}& \circ^{a_{n+1}}\ar[l]_{\alpha_n} &\circ^{a_{n+2}}\ar[l]_{\alpha_{n+1}} &\cdots \ar[l]_{\alpha_{n+2}} &\circ^{a_{n+m+1}}\ar[l]_{\alpha_{n+m}}. }\]

Note that $p_1,p_2,\alpha_n\alpha_{n+1}\notin \langle I^{sg}\rangle$. Then we have $\alpha_i\alpha_{i+1}\notin I^{sg}$ for any $1\leq i\leq n+m-1$. First we define an operator $\Phi$ on $p_1p_2$ as follows: for any $2\leq s\leq n+m$, if $a_s$ comes from a special vertex $b_s\in Sp$, then there exists two arrows $\beta_1,\beta_2$ with $s(\beta_1)=t(\beta_2)$, such that $a_s\in\{b_s^+,b_s^-\}$ and $\{\alpha_{s-1},\alpha'_{s-1}\}=\{(a_s,\beta_1,a_{s-1}),(a'_s,\beta_1,a_{s-1})\}$, $\{\alpha_{s},\alpha'_{s}\}=\{(a_{s+1},\beta_2,a_{s}),(a_{s+1},\beta_2,a'_{s})\}$, where $a'_{s}$ is the vertex such that $\{a_s,a_s'\}=\{b_s^+,b_s^-\}$, by the definition of $I^{sg}$.
Set $p_1'=\alpha'_1\dots \alpha'_n$ and $p_2'=\alpha_{n+1}'\dots \alpha_{n+m}'$, where $\alpha_i'=\alpha_i$ for $i\notin\{ s-1,s\}$. Then $p_1p_2\sim p_1'p_2'$ and we define $\Phi(p_1p_2):=p_1'p_2'$.

If $\alpha'_{i}\alpha'_{i+1}\in I^{sg}$ for some $1\leq i\leq n+m-1$, then $i=s-2$, $s-1$ or $s$, since $\alpha'_{i}\alpha'_{i+1}=\alpha_i\alpha_{i+1}\notin I^{sg}$ for any $i\neq s-2,s-1,s$. However, it easy to see that $\alpha'_{s-1}\alpha_s'\notin I^{sg}$ from $\alpha'_{s-1}\alpha'_s\sim\alpha_{s-1}\alpha_s$. So $i=s-2$ or $s$, and we have $\alpha_{s-2}\alpha'_{s-1}\in I^{sg}$ or $\alpha'_{s}\alpha_{s+1}\in I^{sg}$, by the definition of $(Q^{sg},I^{sg})$, which implies $\alpha_{s-2}\alpha_{s-1}\in I^{sg}$ or $\alpha_{s}\alpha_{s+1}\in I^{sg}$ respectively, contradicts. So $\alpha'_{i}\alpha'_{i+1}\notin I^{sg}$ for any $1\leq i\leq n+m-1$.

Suppose $p_1p_2\in \langle I^{sg}\rangle$. Since $p_1,p_2,\alpha_n\alpha_{n+1}\notin \langle I^{sg}\rangle$ and the relations in $I^{sg}$ are either the zero relations or commutative relations, we know that there must be a finite sequence of operations:
$$p_1p_2\xrightarrow{\Phi_1}p_1'p_2'\xrightarrow{\Phi_2}\cdots \xrightarrow{\Phi_r}p_1^{(r)}p_2^{(r)},$$
where $p_1^{(i)}=\alpha_1^{(i)}\dots\alpha_n^{(i)}$ is a path of length $n$ and $p_2^{(i)}=\alpha_{n+1}^{(i)}\dots\alpha_{n+m}^{(i)}$ is a path of length $m$ for any $1\leq i\leq r$, and $\Phi_j$ is the operator defined for some special vertex in $p_1^{(j-1)}p_2^{(j-1)}$ for any $1\leq j\leq r$, such that $\alpha_k^{(r)}\alpha_{k+1}^{(r)}\in I^{sg}$ for some $1\leq k\leq n+m-1$. By the property of the operator $\Phi_1$ and $\alpha_i\alpha_{i+1}\notin I^{sg}$ for any $1\leq i\leq n+m-1$, we know that $\alpha'_{i}\alpha'_{i+1}\notin I^{sg}$ for any $1\leq i\leq n+m-1$, and discussing $\Phi_j$ recursively, we get that $\alpha_i^{(j)}\alpha_{i+1}^{(j)}\notin I^{sg}$ for any $1\leq j\leq r$ and $1\leq i\leq n+m-1$, which contradicts to $\alpha_k^{(r)}\alpha_{k+1}^{(r)}\in I^{sg}$ for some $1\leq k\leq n+m-1$. So $p_1p_2\notin \langle I^{sg}\rangle$.
\end{proof}

\begin{remark}
The above lemma is not true for any finite dimensional algebra $KQ/\langle I\rangle$ with the relations in $I$ being zero or commutative relations. Here is an example, let $(Q,I)$ be the pair as the following diagram shows.
\[ \xymatrix{&\circ^{3}\ar[dl]_{\gamma_1}&&&\\
\circ^1 &  & \circ^2 \ar[ul]_{\beta_1} \ar[dl]^{\beta_2} & \circ^5\ar[l]_\alpha &I=\{ \gamma_1\beta_1-\gamma_2\beta_2,\beta_2\alpha\}\\
&\circ^{4}\ar[ul]^{\gamma_2} &&&
}\]
Then $\gamma_1\beta_1\notin \langle I\rangle,\beta_1\alpha\notin \langle I\rangle$, but $\gamma_1\beta_1\alpha\sim\gamma_2\beta_2\alpha\in \langle I\rangle$.
\end{remark}

\begin{lemma}\label{lemma 2}
Let $(Q,Sp,I)$ be a skewed-gentle triple and $(Q^{sg},I^{sg})$ its corresponding skewed-gentle pair. For any oriented cycle $\alpha_2\alpha_3\dots \alpha_n\alpha_1$ in $Q^{sg}$,
we get that either $\alpha_1\alpha_2\in \langle I^{sg}\rangle$, or $\alpha_2\alpha_3\dots \alpha_n\alpha_1\in \langle I^{sg}\rangle$.
\end{lemma}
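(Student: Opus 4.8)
The plan is to argue by contradiction and to manufacture arbitrarily long nonzero paths by gluing the cycle to itself, which the finite-dimensionality of the algebra forbids. Write $c=\alpha_2\alpha_3\cdots\alpha_n\alpha_1$ for the given cycle, and suppose contrary to the claim that both $\alpha_1\alpha_2\notin\langle I^{sg}\rangle$ and $c\notin\langle I^{sg}\rangle$. The first thing to notice is that $c$ is a genuine oriented path running from $s(\alpha_1)$ to $t(\alpha_2)$, and since it is a cycle these two endpoints coincide; hence $c$ may be concatenated with further copies of itself. In the product $c\cdot c$ the junction is the last arrow $\alpha_1$ of the first copy followed by the first arrow $\alpha_2$ of the second, so the connecting length-two path is precisely $\alpha_1\alpha_2$.

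The central step is to show by induction that $c^k\notin\langle I^{sg}\rangle$ for every $k\geq 1$. The base case $k=1$ is the contradiction hypothesis. For the inductive step, assuming $c^{k-1}\notin\langle I^{sg}\rangle$, I would apply Lemma \ref{lemma 1} with $p_1=c^{k-1}$ and $p_2=c$: the last arrow of $c^{k-1}$ is again $\alpha_1$ and the first arrow of $c$ is $\alpha_2$, so the required connecting relation is once more $\alpha_1\alpha_2\notin\langle I^{sg}\rangle$, and the lemma delivers $c^{k}=c^{k-1}\cdot c\notin\langle I^{sg}\rangle$. Thus nonvanishing propagates to all powers of the cycle.

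To close the argument, recall that $KQ^{sg}/\langle I^{sg}\rangle$ is a finite-dimensional $K$-algebra, so $\langle I^{sg}\rangle$ is an admissible ideal and therefore contains every path of length at least $N$ for some fixed $N$. But $c^k$ has length $kn$, which exceeds $N$ once $k$ is large, forcing $c^k\in\langle I^{sg}\rangle$ and contradicting the previous paragraph. Hence the assumption is untenable, and at least one of $\alpha_1\alpha_2\in\langle I^{sg}\rangle$ or $c\in\langle I^{sg}\rangle$ holds, as claimed.

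I expect the only delicate point to be the gluing step, and this is exactly where Lemma \ref{lemma 1} is indispensable. For a general algebra with commutative relations the concatenation of two nonzero paths along a nonzero length-two subpath can itself become zero --- the Remark following Lemma \ref{lemma 1} exhibits precisely such behaviour --- so the naive principle ``a nonzero cycle has nonzero powers'' is simply false in that generality. Lemma \ref{lemma 1} asserts that this pathology cannot occur for skewed-gentle pairs, which is what legitimises the repeated self-concatenation; with that in hand the finite-dimensionality contradiction is entirely routine.
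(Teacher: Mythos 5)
Your proof is correct and takes essentially the same route as the paper: the paper also argues by contradiction, invokes Lemma \ref{lemma 1} to conclude that every power $(\alpha_2\alpha_3\dots\alpha_n\alpha_1)^k$ lies outside $\langle I^{sg}\rangle$, and then contradicts the finite-dimensionality of $KQ^{sg}/\langle I^{sg}\rangle$. Your explicit induction on $k$ merely spells out the repeated application of Lemma \ref{lemma 1} that the paper leaves implicit.
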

\begin{proof}
Suppose $\alpha_1\alpha_2\notin \langle I^{sg}\rangle$ and $\alpha_2\alpha_3\dots \alpha_n\alpha_1\notin \langle I^{sg}\rangle$. Then Lemma \ref{lemma 1} implies
$$(\alpha_2\alpha_3\dots\alpha_n\alpha_1)^n\notin \langle I^{sg}\rangle,$$
for any $n>0$, and so $KQ^{sg}/\langle I^{sg}\rangle$ is infinite dimensional, a contradiction.
\end{proof}

Let $Q$ be any quiver. For any path $\alpha_1\alpha_2\dots\alpha_n$ in $Q$, we call a path $c=\alpha_1\alpha_2\dots\alpha_n$ \emph{passing through a vertex} $b$ if $b=t(\alpha_j)$ for some $2\leq j\leq n$.

The following theorem is the first main result of the paper.
\begin{theorem}\label{main theorem}
Let $(Q,Sp,I)$ be a skewed-gentle triple. Then its corresponding skewed-gentle algebra $KQ^{sg}/\langle I^{sg}\rangle$ is singularity equivalent to the gentle algebra $KQ/\langle I\rangle$.
\end{theorem}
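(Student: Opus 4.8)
The plan is to peel off the special vertices one at a time by means of the matrix-algebra construction of Proposition \ref{proposition homological epimorphism induces singularity equivalences}, inducting on $\sharp(Sp)$. When $Sp=\emptyset$ the triple $(Q,\emptyset,I)$ has $(Q^{sg},I^{sg})=(Q,I)$, so there is nothing to prove. For the inductive step fix a special vertex $i\in Sp$, write $\Lambda=KQ^{sg}/\langle I^{sg}\rangle$, and let $e=e_{i^-}$ be the primitive idempotent at the split vertex $i^-$. The Peirce decomposition with respect to $e$ and $1-e$ presents $\Lambda$ as the matrix algebra
\[\Lambda\cong\begin{pmatrix} A & M\\ N & e\Lambda e\end{pmatrix},\qquad A=(1-e)\Lambda(1-e),\ M=(1-e)\Lambda e,\ N=e\Lambda(1-e),\]
with the multiplication map $\phi\colon M\otimes_K N\to A$, $m\otimes n\mapsto mn$, in the role of the bimodule morphism. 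My goal is to check that this data satisfies the hypotheses of Proposition \ref{proposition homological epimorphism induces singularity equivalences} and that $A/\Im\phi$ is again a skewed-gentle algebra, now for the triple $(Q,Sp\setminus\{i\},I)$ with one fewer special vertex.

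The key step is to verify $e\Lambda e\cong K$, i.e.\ that there is no nonzero oriented cycle based at $i^-$; this is exactly where Lemma \ref{lemma 2} enters. If $i$ has valency $\le 1$, or valency $2$ with both arrows incoming or both outgoing, then $i^-$ carries only incoming or only outgoing arrows and supports no cycle. The remaining case is $i$ of valency $2$ with one arrow $\beta$ into $i$ and one arrow $\alpha$ out of $i$; admissibility of $Sp$ forces $\alpha\beta\in I$, so the corresponding relation in $I^{sg}$ is the commutative relation pairing the $i^+$- and $i^-$-paths, whence passing through $i^-$ is nonzero in $\Lambda$. For any cycle based at $i^-$ its wrap-around composition is precisely this (nonzero) passage through $i^-$, so Lemma \ref{lemma 2} forces the cycle itself to lie in $\langle I^{sg}\rangle$. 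Thus every cycle at $i^-$ vanishes and $e\Lambda e=Ke\cong K$.

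The same vanishing of cycles at $i^-$ yields the condition that $\Im\phi=MN$ annihilates $M$ and $N$: any element of $MNM$ or $NMN$ is represented by a path meeting $i^-$ at an interior vertex, hence containing a cyclic subpath at $i^-$, and is therefore zero. For injectivity of $\phi$ I would argue that a nonzero path through $i^-$ factors uniquely as $mn$ with $m\in M$, $n\in N$: Lemma \ref{lemma 1} guarantees that the concatenation of two nonzero paths with nonzero junction at $i^-$ is again nonzero, so $\phi$ sends the product basis of $M\otimes_K N$ to linearly independent elements of $A$ (the commutative relations at the \emph{other} special vertices do not change how often a path meets $i^-$, so the factorisation is well defined in $\Lambda$). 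With all hypotheses in place, Proposition \ref{proposition homological epimorphism induces singularity equivalences} gives $D_{sg}^b(\Lambda)\simeq D_{sg}^b(A/\Im\phi)$.

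Finally I would identify $A/\Im\phi$: deleting the vertex $i^-$ and killing all paths through it leaves the full subquiver of $Q^{sg}$ on the remaining vertices, in which the commutative relation at $i$ has collapsed to the zero relation $\alpha^+\beta^+$. This is precisely the skewed-gentle pair attached to $(Q,Sp\setminus\{i\},I)$, with $i$ now treated as an ordinary vertex. Since this triple has one fewer special vertex, induction completes the proof, terminating at $KQ/\langle I\rangle$. I expect the main obstacle to be the bookkeeping in the last two steps — verifying rigorously that $\phi$ is injective in the presence of the commutative relations, and that $A/\Im\phi$ is exactly the reduced skewed-gentle algebra — rather than the homological input, which is supplied entirely by Proposition \ref{proposition homological epimorphism induces singularity equivalences}.
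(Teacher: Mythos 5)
Your proposal is correct and takes essentially the same approach as the paper: remove one special vertex at a time via the idempotent at $i^-$, use Lemma \ref{lemma 2} to get $e_{i^-}\Gamma e_{i^-}\cong K$ (which also gives that $\Im\phi$ kills $M$ and $N$), use Lemma \ref{lemma 1} for the injectivity of $\phi$, apply Proposition \ref{proposition homological epimorphism induces singularity equivalences}, identify $A/\Im\phi$ with the skewed-gentle algebra of $(Q,Sp\setminus\{i\},I)$, and recurse over $\sharp(Sp)$ steps. The only place the paper works substantially harder is exactly the step you flag as bookkeeping: injectivity of $\phi$ is established there by exhibiting explicit path bases $T_1$, $T_2$, $T_A$ of $M$, $N$, $A$ (paths avoiding $e^+$ for every special vertex $e$, shown to be bases by a dimension count against the auxiliary algebra $KQ/\langle I_1\rangle$ with $I_1=\{\beta\alpha\in I\mid t(\alpha)\notin Sp\}$), after which concatenation maps $T_1\times T_2$ injectively into the basis $T_A$ --- precisely the rigorous version of your unique-factorisation remark.
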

\begin{proof}
For any vertex $a\in Sp$, there are two vertices $a^+,a^-$ in $Q^{sg}$. We denote by $e_{a^-}$ the primitive idempotent corresponding to the vertex $a^-$. Set $\Gamma=KQ^{sg}/\langle I^{sg}\rangle$, and $\Gamma'=\Gamma/\Gamma e_{a^-}\Gamma$, whose quiver is obtained from $Q^{sg}$ by removing the vertex $a^-$ and the adjacent arrows $\alpha^{+},\alpha^-$. Then $\Gamma'$ is a skewed-gentle algebra, in fact, $\Gamma'$ is the skewed-gentle
algebra with the corresponding skewed-gentle triple $(Q,Sp'=Sp\setminus \{a\},I)$. We discuss $\Gamma$ in the following three cases.

Case (a). If the valency of $a$ is $0$, then $KQ^{sg}/\langle I^{sg}\rangle=\Gamma' \oplus K$, and it is easy to see that
$D^b_{sg}(KQ^{sg}/\langle I^{sg}\rangle)\simeq D^b_{sg}(\Gamma')$.

Case (b). If the valency of $a$ is $1$, then $KQ^{sg}/\langle I^{sg}\rangle$ is a one-point extension or one-point coextension of $\Gamma'$, so $D^b_{sg}(KQ^{sg}/\langle I^{sg}\rangle)\simeq D^b_{sg}(\Gamma')$ by Proposition \ref{proposition homological epimorphism induces singularity equivalences}.

Case (c). If the valency of $a$ is $2$, then there exist only two arrows $\alpha_1, \alpha_2$ in $Q$ such that $s(\alpha_1)=a=t(\alpha_2)$, and $\alpha_1\alpha_2\in I$. Set $b=s(\alpha_2)$ and $c=t(\alpha_1)$. We do not exclude the case $b=c$. Then there are four cases.

Case (c1). If $b,c\notin Sp$, then the quiver of $Q^{sg}$ is as the picture Case (c1) in Figure 1 shows.
Then $\alpha_1^+\alpha_2^+-\alpha_1^-\alpha_2^-\in I^{sg}$.

We fix some notations.
Let $A=(1-e_{a^-})\Gamma(1-e_{a^-})$,
$$M:=\Span_K\{p| p\mbox{ is a path in}(Q^{sg},I^{sg}), \mbox{ and }p=p_1\alpha_1^-\mbox{ for some path }p_1\},$$
$$N:=\Span_K\{p| p\mbox{ is a path in}(Q^{sg},I^{sg}), \mbox{ and }p=\alpha_2^-p_2\mbox{ for some path }p_2\}.$$
Then $M$ is naturally a left $A$-module, and $N$ is naturally a right $A$-module. Note that $M$ and $N$ are finite dimensional vector spaces since $\Gamma$ is finite dimensional.
Furthermore, the $A\mbox{-}A$-bimodule morphism $\phi:M\otimes_K N\rightarrow A$ is defined as
$$\phi(p_1\alpha_1^-\otimes \alpha_2^-p_2)=p_1\alpha_1^-\alpha_2^-p_2.$$

We claim that $\phi$ is injective. 
We define 
\begin{eqnarray*}
T_1&:=&\{p| p\mbox{ is a path in}(Q^{sg},I^{sg}), \mbox{ and }p=p_1\alpha_1^-\mbox{ for some path }p_1 \\
&&\mbox{ which does not pass through } e^+ \mbox{ for any special vertex } e\},
\end{eqnarray*}
and
\begin{eqnarray*}
T_2&:=&\{p| p\mbox{ is a path in}(Q^{sg},I^{sg}), \mbox{ and }p=\alpha_2^-p_2\mbox{ for some path }p_2 \\
&&\mbox{ which does not pass through } e^+ \mbox{ for any special vertex } e\},
\end{eqnarray*}
where $\alpha_1^+=(a^+,\alpha_1,c)$ and $\alpha_2^+=(b,\alpha_2,a^+)$.
By the definition of $(Q^{sg},I^{sg})$, it is easy to see that
$M=\Span_KT_1$ and $N=\Span_KT_2$.

Let $I_1=\{\beta\alpha\in I| t(\alpha)\notin Sp\}$ and $\Lambda= KQ/ \langle I_1\rangle$.
We also define $$S_1:=\{p| p\mbox{ is a path in}(Q,I_1), \mbox{ and }p=p_1\alpha_1\mbox{ for some path }p_1\}, M':=\Span_K S_1,$$
and 
$$S_2:=\{p| p\mbox{ is a path in}(Q,I_1), \mbox{ and }p=\alpha_2 p_2\mbox{ for some path }p_2\}, N':=\Span_K S_2.$$
In fact, $M'$ (resp. $N'$) is a left (resp. right) $\Lambda$-module which is isomorphic to the radical of the indecomposable left (resp. right) projective module $P_{a}$ corresponding to the vertex $a$.
Note that $\Lambda$ is the algebra $KQ/\langle I_1\rangle $ with the ideal generated by zero-relations of length two. 
So $S_1$ (resp. $S_2$) is a basis of $M'$ (resp. $N'$). Denote by $S_1=\{u_1,\dots,u_m\}$, $S_2=\{v_1,\dots,v_n\}$.
By the definition of $(Q^{sg},I^{sg})$, we get that
$$\dim_K M=\dim_K M'+\sharp\{u_i|t(u_i)\in Sp\}=\sharp (T_1),$$ 
$$\dim_K N=\dim_K N'+\sharp\{v_i|s(v_i)\in Sp \}=\sharp (T_2).$$
which yields that
$T_1$ and $T_2$ are basis of the linear spaces $M$ and $N$ over $K$, respectively.

Similarly, we get that \begin{eqnarray*}
T_A&:=&\{p| p\mbox{ is a path in}(Q^{sg},I^{sg}), \mbox{ and }p
\mbox{ does not pass through }\\
&& e^+ \mbox{ for any special vertex } e, \mbox{ and }s(p),t(p)\neq a^- \}
\end{eqnarray*} is a basis of $A$ over $K$.
Note that $$\phi(p_1\alpha_1^-\otimes \alpha_2^-p_2)=p_1\alpha_1^-\alpha_2^-p_2,$$ for any $p_1\alpha_1^-\in T_1$ and $\alpha_2^-p_2\in T_2$. Denote by $T:=\{u_i\otimes v_j| u_i\in T_1, v_j\in T_2\}$. Then $T$ is a basis of $M\otimes _K N$.
Lemma \ref{lemma 1} yields that $p_1\alpha_1^-\alpha_2^-p_2$ is nonzero in $A$, which is in $T_A$. It is easy to see that $\phi$ induces an injective map from $T$ to $T_A$, which means $\phi$ is injective.

It follows from Lemma \ref{lemma 2} that $e_{a^-}\Gamma e_{a^-}$ is isomorphic to $K$.
We identify $\Gamma$ with $\left( \begin{array}{cc} A &M \\ N& K \end{array}  \right)$, where the $K$ in the southeast corner is identified with $e_{a^-}\Gamma e_{a^-}$.
Note that $A/\Im \phi=\Gamma'$. Then Proposition \ref{proposition homological epimorphism induces singularity equivalences} yields a triangle equivalence
$$D^b_{sg}(\Gamma)\simeq D^b_{sg}(\Gamma').$$

Case (c2). If $b\in Sp$, $c\notin Sp$, then the quiver $Q^{sg}$ is as the picture Case (c2) in Figure 1 shows.
Then $\gamma_1\beta_1-\gamma_2\beta_2$, $\gamma_1\beta_3-\gamma_2\beta_4\in I^{sg}$. Let $A=(1-e_{a^-})\Gamma(1-e_{a^-})$,
$$M:=\Span_K\{p| p\mbox{ is a path in}(Q^{sg},I^{sg}), \mbox{ and }p=p_1\gamma_2\mbox{ for some path }p_1\},$$
$$N:=\Span_K\{p| p\mbox{ is a path in}(Q^{sg},I^{sg}), \mbox{ and }p=\beta_2p_2\mbox{ or }\beta_4p_2\mbox{ for some path }p_2\}.$$
The remaining is similar to Case (c1), we omit the proof.

Case (c3). If $b\notin Sp$, $c\in Sp$, then the quiver $Q^{sg}$ is as the picture Case (c3) in Figure 1 shows. This case is similar to Case (c2), we omit the proof.

Case (c4). If $b,c\in Sp$, then the quiver $Q^{sg}$ is as the picture Case (c4) in Figure 1 shows.
Then $\gamma_1\beta_1-\gamma_2\beta_2$, $\gamma_1\beta_3-\gamma_2\beta_4$, $\gamma_3\beta_1-\gamma_4\beta_2$, $\gamma_3\beta_3-\gamma_4\beta_4\in I^{sg}$. Let $A=(1-e_{a^-})\Gamma(1-e_{a^-})$,
$$M:=\Span_K\{p| p\mbox{ is a path in}(Q^{sg},I^{sg}), \mbox{ and }p=p_1\gamma_2\mbox{ or }p_1\gamma_4\mbox{ for some path }p_1\},$$
$$N:=\Span_K\{p| p\mbox{ is a path in}(Q^{sg},I^{sg}), \mbox{ and }p=\beta_2p_2\mbox{ or }\beta_4p_2\mbox{ for some path }p_2\}.$$
The remaining is similar to Case (c1), we omit the proof.

\setlength{\unitlength}{0.7mm}
\begin{center}
\begin{picture}(200,100)
\put(-10,70){\begin{picture}(100,30 )

\put(50,15){\circle*{2}}
\put(50,-15){\circle*{2}}
\qbezier(65,10)(75,0)(65,-10)
\qbezier(35,10)(25,0)(35,-10)
\qbezier(35,10)(50,30)(65,10)
\qbezier(35,-10)(50,-30)(65,-10)
\put(20,0){\circle*{2}}

\put(80,0){\circle*{2}}

\put(50,15){\vector(2,-1){29}}
\put(80,0){\vector(-2,-1){29}}

\put(50,15){\vector(-2,-1){29}}
\put(20,0){\vector(2,-1){29}}
\put(47,15){$b$}
\put(47,-19){$c$}
\put(12,-2){$a^+$}
\put(82,-2){$a^-$}
\put(30,10){$\alpha_2^+$}
\put(30,-12){$\alpha_1^+$}

\put(65,8){$\alpha_2^-$}
\put(65,-10){$\alpha_1^-$}
\put(47,-2){$Q'$}

\put(35,-35){Case (c1)}
\end{picture}}

\put(100,70){\begin{picture}(100,30)

\put(50,15){\circle*{2}}
\put(50,-15){\circle*{2}}
\put(50,10){\circle*{2}}
\qbezier(65,10)(75,0)(65,-10)
\qbezier(35,10)(25,0)(35,-10)
\qbezier(35,10)(50,30)(65,10)
\qbezier(35,-10)(50,-30)(65,-10)
\put(20,0){\circle*{2}}

\put(80,0){\circle*{2}}
\put(50,10){\vector(3,-1){29}}
\put(50,10){\vector(-3,-1){29}}
\put(50,15){\vector(2,-1){29}}
\put(80,0){\vector(-2,-1){29}}

\put(50,15){\vector(-2,-1){29}}
\put(20,0){\vector(2,-1){29}}
\put(47,15){$b^+$}
\put(47,5){$b^-$}
\put(47,-19){$c$}
\put(12,-2){$a^+$}
\put(82,-2){$a^-$}
\put(40,12){$\beta_1$}
\put(40,-12){$\gamma_1$}
\put(40,3){$\beta_3$}
\put(60,2){$\beta_4$}
\put(60,10){$\beta_2$}
\put(60,-10){$\gamma_2$}
\put(47,-2){$Q'$}

\put(35,-35){Case (c2)}
\end{picture}}

\put(-10,0){\begin{picture}(100,30)

\put(50,15){\circle*{2}}
\put(50,-15){\circle*{2}}
\put(50,-10){\circle*{2}}
\qbezier(65,10)(75,0)(65,-10)
\qbezier(35,10)(25,0)(35,-10)
\qbezier(35,10)(50,30)(65,10)
\qbezier(35,-10)(50,-30)(65,-10)
\put(20,0){\circle*{2}}

\put(80,0){\circle*{2}}
\put(20,0){\vector(3,-1){29}}
\put(80,0){\vector(-3,-1){29}}
\put(50,15){\vector(2,-1){29}}
\put(80,0){\vector(-2,-1){29}}

\put(50,15){\vector(-2,-1){29}}
\put(20,0){\vector(2,-1){29}}
\put(47,15){$b$}
\put(47,-9){$c^-$}
\put(47,-19){$c^+$}
\put(12,-2){$a^+$}
\put(82,-2){$a^-$}
\put(40,10){$\beta_1$}
\put(40,-15){$\gamma_1$}
\put(40,-5){$\gamma_3$}
\put(60,-5){$\gamma_4$}
\put(60,8){$\beta_2$}
\put(60,-12){$\gamma_2$}
\put(47,-2){$Q'$}

\put(35,-35){Case (c3)}
\end{picture}}

\put(100,0){\begin{picture}(100,30)

\put(50,15){\circle*{2}}
\put(50,-15){\circle*{2}}
\put(50,-10){\circle*{2}}
\qbezier(65,10)(75,0)(65,-10)
\qbezier(35,10)(25,0)(35,-10)
\qbezier(35,10)(50,30)(65,10)
\qbezier(35,-10)(50,-30)(65,-10)
\put(20,0){\circle*{2}}

\put(80,0){\circle*{2}}
\put(20,0){\vector(3,-1){29}}
\put(80,0){\vector(-3,-1){29}}
\put(50,15){\vector(2,-1){29}}
\put(80,0){\vector(-2,-1){29}}

\put(50,15){\vector(-2,-1){29}}
\put(20,0){\vector(2,-1){29}}
\put(47,15){$b^+$}
\put(47,-9){$c^-$}
\put(47,-19){$c^+$}
\put(12,-2){$a^+$}
\put(82,-2){$a^-$}
\put(40,10){$\beta_1$}
\put(40,-15){$\gamma_1$}
\put(40,-5){$\gamma_3$}
\put(60,-5){$\gamma_4$}
\put(60,8){$\beta_2$}
\put(60,-12){$\gamma_2$}
\put(47,-2){$Q'$}
\put(50,10){\circle*{2}}
\put(40,3){$\beta_3$}
\put(60,2){$\beta_4$}
\put(47,5){$b^-$}
\put(50,10){\vector(3,-1){29}}
\put(50,10){\vector(-3,-1){29}}
\put(35,-35){Case (c4)}
\end{picture}}
\put(50,-50){Figure 1. The quiver $Q^{sg}$ in Case (c).}
\end{picture}
\vspace{4cm}
\end{center}

To sum up, we get that $D^b_{sg}(\Gamma)\simeq D^b_{sg}(\Gamma')$.
Since $\Gamma'$ is also skewed-gentle, we replace $\Gamma$ in the above with $\Gamma'$, and discuss it recursively. After $\sharp(Sp)$ steps, we can get that $D^b_{sg}(\Gamma)\simeq D^b_{sg}(KQ/\langle I\rangle)$. The proof is complete.
\end{proof}

\begin{corollary}\label{corollary 1}
Let $(Q,Sp,I)$ be a skewed-gentle triple. Then the following statements are equivalent.

(i) $\gldim KQ^{sg}/\langle I^{sg}\rangle<\infty,$

(ii) $\gldim KQ/\langle I\rangle<\infty.$
\end{corollary}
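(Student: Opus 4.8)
The plan is to derive this entirely from Theorem \ref{main theorem} together with the standard characterization of finite global dimension via the singularity category, already recorded in the excerpt: for any finite-dimensional algebra $\Gamma$ one has $D^b_{sg}(\Gamma)=0$ if and only if $\gldim\Gamma<\infty$ \cite{Ha1}. Since this is a purely logical equivalence applied at the two ends of a triangle equivalence, no further structural analysis of the skewed-gentle or gentle quivers is needed at this stage.

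First I would invoke Theorem \ref{main theorem} to obtain a triangle equivalence
$$D^b_{sg}(KQ^{sg}/\langle I^{sg}\rangle)\simeq D^b_{sg}(KQ/\langle I\rangle).$$
A triangle equivalence in particular preserves the property of being the zero category: the left-hand side vanishes if and only if the right-hand side does, because an equivalence sends zero objects to zero objects in both directions.

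Next I would translate vanishing into finiteness of the global dimension on each side by applying the criterion $D^b_{sg}(\Gamma)=0\Leftrightarrow\gldim\Gamma<\infty$ once with $\Gamma=KQ^{sg}/\langle I^{sg}\rangle$ and once with $\Gamma=KQ/\langle I\rangle$. Chaining these equivalences gives
\begin{align*}
\gldim KQ^{sg}/\langle I^{sg}\rangle<\infty
&\iff D^b_{sg}(KQ^{sg}/\langle I^{sg}\rangle)=0\\
&\iff D^b_{sg}(KQ/\langle I\rangle)=0\\
&\iff \gldim KQ/\langle I\rangle<\infty,
\end{align*}
which is exactly the equivalence of (i) and (ii).

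There is essentially no obstacle to overcome here, since all the genuine difficulty---namely, establishing the singularity equivalence through the matrix-algebra and homological-epimorphism construction of Proposition \ref{proposition homological epimorphism induces singularity equivalences}, together with the combinatorial input of Lemmas \ref{lemma 1} and \ref{lemma 2}---is already carried out in the proof of Theorem \ref{main theorem}. The only point that merits a moment's care is the logical bookkeeping: one must apply the vanishing criterion to \emph{both} algebras and use that the triangle equivalence transfers triviality in both directions, so that the two finiteness statements are genuinely equivalent rather than merely one implying the other.
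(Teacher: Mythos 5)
Your proposal is correct and is essentially identical to the paper's own proof: the paper also deduces the corollary in one step from Theorem \ref{main theorem} combined with Happel's criterion that $D^b_{sg}(\Gamma)=0$ if and only if $\gldim\Gamma<\infty$. Your version merely spells out the bookkeeping (applying the vanishing criterion on both sides of the triangle equivalence) that the paper leaves implicit.
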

\begin{proof}
It follows from Theorem \ref{main theorem} that $\gldim KQ^{sg}/\langle I^{sg}\rangle<\infty$ if and only if $\gldim KQ/\langle I\rangle<\infty$ since they are singularity equivalent.
\end{proof}

\section{The second main theorem}
We first recall the singularity category of a gentle algebra due to \cite{Ka}. For a gentle algebra $\Lambda=KQ/I$, we denote by $\cc(\Lambda)$ the set of equivalence classes (with respect to cyclic permutation) of \emph{repetition-free} cyclic paths $\alpha_1\dots\alpha_n$ in $Q$ such that $\alpha_i\alpha_{i+1}\in I$ for all $i$, where we set $n+1=1$. For convenience, we call any element $c$ in $\cc(\Lambda)$ to be \emph{full repetition-free cyclic}. For every arrow $\alpha\in Q_1$, there is at most one cycle $c\in\cc(\Lambda)$ containing it. Moreover, we set $l(c)$ for the \emph{length} of a cycle $c\in\cc(\Lambda)$, i.e. $l(\alpha_1\dots\alpha_n)=n$.
\begin{theorem}[\cite{Ka}]\label{theorem Kalck}
There is an equivalence of triangulated categories
$$D^b_{sg}(\Lambda)\simeq \prod_{c\in\cc(\Lambda)} \frac{D^b( K)}{[l(c)]} ,$$
where $ D^b(K)/[l(c)]$ denotes the triangulated orbit category, see Keller \cite{Ke}.
\end{theorem}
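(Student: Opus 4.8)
The plan is to convert the singularity category into a stable module category, produce the relevant modules explicitly from the cyclic relations, and then recognise each resulting block as an orbit category. Since $\Lambda$ is gentle it is Gorenstein by Gei{\ss}--Reiten, so the Buchweitz--Happel theorem quoted above yields a triangle equivalence $D^b_{sg}(\Lambda)\simeq\underline{\Gproj}(\Lambda)$, where the suspension is the inverse syzygy $\Omega^{-1}$. The whole content of the statement is therefore to classify the Gorenstein projective $\Lambda$-modules and to see that they organise themselves, block by block, according to the cycles in $\cc(\Lambda)$.

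For each $c=\alpha_1\cdots\alpha_n\in\cc(\Lambda)$ I would first write down the $n$-periodic complex of projectives
$$P^{(c)}_\bullet:\ \cdots\xrightarrow{\,\cdot\alpha_{i-1}\,}\Lambda e_{s(\alpha_{i-1})}\xrightarrow{\,\cdot\alpha_{i}\,}\Lambda e_{s(\alpha_{i})}\xrightarrow{\,\cdot\alpha_{i+1}\,}\cdots,$$
whose differentials are right multiplication by the arrows of $c$, so that any two consecutive differentials compose to right multiplication by $\alpha_{i-1}\alpha_i\in I$, hence vanish. The gentle hypotheses should guarantee that $P^{(c)}_\bullet$ is exact and stays exact after applying $\Hom_\Lambda(-,\Lambda)$, i.e. that it is totally acyclic; thus each syzygy $G^{(c)}_i:=\Im(\cdot\alpha_i)$ is an indecomposable Gorenstein projective module. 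By construction $\Omega G^{(c)}_i\cong G^{(c)}_{i+1}$, so $\Omega$ simply rotates the $n$ modules attached to $c$ and $\Omega^{\,l(c)}\cong\Id$ on this family. This periodicity is exactly the source of the shift $[l(c)]$ in the final answer.

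Next I would use the string/band description of $\mod\Lambda$ to prove completeness: a totally acyclic complex of projectives over a gentle algebra can be assembled only from chains of consecutive arrows forming relations, so it must run along some cycle in $\cc(\Lambda)$, and the $G^{(c)}_i$ together with the projectives exhaust the indecomposable Gorenstein projectives. Since every arrow lies on at most one such cycle and there are no nonzero stable morphisms between modules attached to distinct cycles, $\underline{\Gproj}(\Lambda)$ splits as a product $\prod_{c\in\cc(\Lambda)}\underline{\cg_c}$ of Hom-orthogonal triangulated blocks, where $\underline{\cg_c}$ is generated by $G^{(c)}_1,\dots,G^{(c)}_{l(c)}$. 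Each block is the stable category of the self-injective cyclic Nakayama algebra $N_{l(c)}$ (the cyclic quiver on $l(c)$ vertices with radical square zero), in which $\Omega$ permutes the $l(c)$ simples cyclically; a direct computation of morphism spaces then matches $\underline{\cg_c}\simeq\underline{\mod}\,N_{l(c)}$ with $D^b(K)/[l(c)]$, since both have $l(c)$ indecomposables, a suspension of order $l(c)$, and one-dimensional graded endomorphisms concentrated in degrees divisible by $l(c)$. Assembling the blocks gives the asserted product.

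The main obstacle is the completeness-plus-orthogonality step: proving that the $G^{(c)}_i$ are \emph{all} the nonprojective indecomposable Gorenstein projectives and that different cycles contribute mutually orthogonal blocks requires the full string combinatorics of gentle algebras, whereas the construction of the periodic complexes and the final orbit-category identification are comparatively routine. An attractive alternative, very much in the spirit of Theorem \ref{main theorem} and Proposition \ref{proposition homological epimorphism induces singularity equivalences}, is to strip away by iterated one-point extensions and coextensions all vertices and arrows not lying on a cycle, thereby reducing $\Lambda$ up to singularity equivalence to the disjoint union $\coprod_{c\in\cc(\Lambda)}N_{l(c)}$ and bypassing the explicit module classification entirely.
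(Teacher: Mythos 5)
The paper offers no proof of this theorem at all---it is quoted directly from Kalck \cite{Ka}---so the relevant comparison is with Kalck's original argument, and your proposal reconstructs it essentially step for step: gentle algebras are Gorenstein (Gei{\ss}--Reiten), Buchweitz--Happel gives $D^b_{sg}(\Lambda)\simeq\underline{\Gproj}(\Lambda)$, the indecomposable non-projective Gorenstein projectives are exactly the arrow ideals arising as syzygies of the periodic totally acyclic complexes built from the cycles $c\in\cc(\Lambda)$, the resulting blocks are Hom-orthogonal with $\Omega$ acting as a rotation of order $l(c)$, and each block is identified with $\underline{\mod}S_{l(c)}\simeq D^b(K)/[l(c)]$. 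Your plan is correct and is the same approach as the cited source: the step you rightly single out as the main obstacle (completeness of the classification of Gorenstein projectives together with orthogonality of the blocks) is precisely the technical core of \cite{Ka}, while the final orbit-category identification is indeed routine, since in these semisimple stable categories the triangulated structure is forced by the additive structure and the suspension.
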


Let $(Q,Sp,I)$ be a skewed-gentle triple. For any $c=\alpha_1\dots\alpha_n\in\cc(\Lambda)$, if there are even (resp., odd) number of special vertices lying on $c$, then we call $c$ an \emph{even (resp., odd) repetition-free cyclic path}. We denote by $\cc^{even}(\Lambda)$ (resp., $\cc^{odd}(\Lambda)$) the set of even (resp., odd) repetition-free cyclic path.
Recall that we call a cyclic path $c=\alpha_1\alpha_2\dots\alpha_n$ passing through a vertex $b$ if $b=t(\alpha_j)$ for some $2\leq j\leq n$.

For any path $c=\alpha_1\dots\alpha_n$ in $Q$ and $2\leq i\leq n$, we set
$$\sigma_i(c):=\left\{ \begin{array}{ccc} + &\mbox{if }\alpha_1\dots\alpha_i\mbox{ passes through even number of special vertices,}\\
- &\mbox{if }\alpha_1\dots\alpha_i\mbox{ passes through odd number of special vertices.}    \end{array}  \right.$$
Sometimes, we write $\sigma_i(c)$ to be $\sigma_i$ if $c$ is obvious.
Similarly, for any $2\leq i\leq n$, we set
$$\tau_i(c):=\left\{ \begin{array}{ccc} + &\mbox{if }\alpha_1\dots\alpha_i\mbox{ passes through odd number of special vertices,}\\
- &\mbox{if }\alpha_1\dots\alpha_i\mbox{ passes through even number of special vertices.}    \end{array}  \right.$$

\begin{lemma}\label{lemma 3}
Let $(Q,Sp,I)$ be a skewed-gentle triple. For any oriented path $c=\alpha_1\alpha_2\dots\alpha_n$ in $Q$, we get that $\alpha_1^+\alpha_2^{\sigma_2}\dots\alpha_n^{\sigma_n}$ and $\alpha_1^-\alpha_2^{\tau_2}\dots\alpha_n^{\tau_n}$ is an oriented path in $Q^g$. In particular, if $\alpha_{i-1}\alpha_i\in I$ for some $2\leq i\leq n$, then $\alpha_{i-1}^{\sigma_{i-1}}\alpha_i^{\sigma_i}$ and $\alpha_{i-1}^{\tau_{i-1}}\alpha_i^{\tau_i}$ are in $I^g$.
\end{lemma}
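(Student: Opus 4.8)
The plan is to reduce the whole statement to a single parity bookkeeping identity and then verify composability vertex by vertex. First I would adopt the convention $\sigma_1 = +$ and $\tau_1 = -$, matching the prescribed superscripts of the leading arrow $\alpha_1$ in the two candidate words; this is consistent with the definition, since the trivial subpath $\alpha_1$ passes through no vertex, hence through an even (namely zero) number of special ones. The crucial observation is a recurrence for the signs: comparing the set of vertices through which $\alpha_1\dots\alpha_i$ passes, namely $\{t(\alpha_2),\dots,t(\alpha_i)\}$, with the corresponding set for $\alpha_1\dots\alpha_{i-1}$, the two differ exactly in the vertex $t(\alpha_i)$. Thus the parity of special vertices flips precisely when $t(\alpha_i)\in Sp$, giving
$$\sigma_i = \sigma_{i-1}\ \text{ if } t(\alpha_i)\notin Sp, \qquad \sigma_i = -\sigma_{i-1}\ \text{ if } t(\alpha_i)\in Sp,$$
together with the identical recurrence for $\tau_i$ (noting $\tau_i = -\sigma_i$ throughout).

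Next I would check that $\alpha_1^+\alpha_2^{\sigma_2}\dots\alpha_n^{\sigma_n}$ is a genuine path in $Q^g$ by testing composability at each internal vertex. Writing $v_i := t(\alpha_i) = s(\alpha_{i-1})$ for $2\le i\le n$, I split into two cases according to the vertex set and the source/target maps of $(Q^g,I^g)$. If $v_i\in Sp$, then $Q_0[v_i]=\{v_i\}$, so both $t(\alpha_i^{\sigma_i})$ and $s(\alpha_{i-1}^{\sigma_{i-1}})$ equal $v_i$ regardless of the superscripts, and composability is automatic. If $v_i\notin Sp$, then $t(\alpha_i^{\sigma_i})=v_i^{\sigma_i}$ and $s(\alpha_{i-1}^{\sigma_{i-1}})=v_i^{\sigma_{i-1}}$, so composability requires $\sigma_i=\sigma_{i-1}$, which is exactly the first clause of the recurrence above. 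This establishes the first path; the second, $\alpha_1^-\alpha_2^{\tau_2}\dots\alpha_n^{\tau_n}$, follows verbatim from the $\tau$-recurrence.

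Finally, for the ``in particular'' clause I would feed the relation $\alpha_{i-1}\alpha_i\in I$ into the definition of $I^g$, with $\beta=\alpha_{i-1}$, $\alpha=\alpha_i$, and $t(\alpha)=v_i$. When $v_i\notin Sp$, membership in $I^g$ demands equal superscripts, and the recurrence supplies $\sigma_i=\sigma_{i-1}$ (resp. $\tau_i=\tau_{i-1}$), so $\alpha_{i-1}^{\sigma_{i-1}}\alpha_i^{\sigma_i}$ (resp. its $\tau$-analogue) is one of $\beta^+\alpha^+,\beta^-\alpha^-$ and lies in $I^g$. When $v_i\in Sp$, membership demands opposite superscripts, and the recurrence now gives $\sigma_i=-\sigma_{i-1}$ (resp. $\tau_i=-\tau_{i-1}$), so the product is one of $\beta^+\alpha^-,\beta^-\alpha^+$ and again lies in $I^g$.

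The argument is pure bookkeeping with no deep obstruction; the only point demanding care is keeping the path-composition convention $t(\alpha_i)=s(\alpha_{i-1})$ straight, matching it correctly against the one-sided difference of the ``passing-through'' vertex sets, and pinning down the base values $\sigma_1=+$, $\tau_1=-$ so that the induction is aligned with the fixed superscript of the leading arrow. Getting these index boundaries exactly right is where a careless version of the proof would slip.
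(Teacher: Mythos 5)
Your proposal is correct and follows essentially the same route as the paper's own proof: fix the base sign $\sigma_1=+$ (resp.\ $\tau_1=-$), observe that the parity flips exactly at special junction vertices, check composability at each internal vertex $t(\alpha_i)=s(\alpha_{i-1})$ by splitting into the cases $t(\alpha_i)\in Sp$ (automatic, single vertex in $Q^g$) and $t(\alpha_i)\notin Sp$ (forced equality $\sigma_{i-1}=\sigma_i$), and then read off membership of $\alpha_{i-1}^{\sigma_{i-1}}\alpha_i^{\sigma_i}$ in $I^g$ from the same dichotomy. Your explicit statement of the sign recurrence and of the relation $\tau_i=-\sigma_i$ only makes the bookkeeping in the paper's argument more transparent; there is no substantive difference.
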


\begin{proof}
We only need prove it for $\alpha_1^+\alpha_2^{\sigma_2}\dots\alpha_n^{\sigma_n}$, the other is similar.
Set $\sigma_1=+$. For any $2\leq i\leq n$, if $t(\alpha_i)$ is special, then $s(\alpha_{i-1}^\pm)=s(\alpha_i)=t(\alpha_i^\pm)$. On the other hand, if
$t(\alpha_i)$ is ordinary, then $\sigma_{i-1}=\sigma_i$, so $s(\alpha_{i-1}^{\sigma_{i-1}})=s(\alpha_i)^{\sigma_{i-1}}=t(\alpha_i^{\sigma_{i}})$.
To sum up, we get that $s(\alpha_{i-1}^{\sigma_{i-1}})=t(\alpha_i^{\sigma_{i}})$ for any $2\leq i\leq n$, so $\alpha_1^+\alpha_2^{\sigma_2}\dots\alpha_n^{\sigma_n}$
is a path in $Q^g$.

For the last statement, if $t(\alpha_i)$ is special, then $\sigma_{i-1}\neq \sigma_i$, which implies that $\alpha_{i-1}^{\sigma_{i-1}}\alpha_i^{\sigma_i}\in I^g$ by the definition of $I^g$; if $t(\alpha_i)$ is ordinary, then $\sigma_{i-1}= \sigma_i$, which also implies that $\alpha_{i-1}^{\sigma_{i-1}}\alpha_i^{\sigma_i}\in I^g$.
\end{proof}

\begin{lemma}\label{lemma 4}
Let $(Q,Sp,I)$ be a skewed-gentle triple. Then for any arrow $\alpha_1$ in $Q$, the following statements are equivalent.

(i) There is a full repetition-free cyclic path in $Q$ containing $\alpha_1$,

(ii) There is a full repetition-free cyclic path in $Q^g$ containing $\alpha_1^+$,

(iii) There is a full repetition-free cyclic path in $Q^g$ containing $\alpha_1^-$.
\end{lemma}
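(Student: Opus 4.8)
The plan is to prove $(i)\Rightarrow(ii),(iii)$ by \emph{lifting} a relation cycle from $Q$ to $Q^g$ via Lemma \ref{lemma 3}, and the converses $(ii),(iii)\Rightarrow(i)$ by \emph{projecting} along the forgetful assignment $\pi\colon Q^g\to Q$ that sends $\alpha^{\pm}\mapsto\alpha$ on arrows and $i^{\pm}\mapsto i$, $j\mapsto j$ on vertices. The structural fact I will use throughout is that for a gentle pair every arrow admits at most one relational predecessor and at most one relational successor; equivalently, the directed graph $R$ on the arrow set with an edge $\alpha\to\gamma$ whenever $\gamma\alpha\in I$ has in- and out-degree at most one, and the elements of $\cc$ are exactly the directed cycles of $R$. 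The same holds for $(Q^g,I^g)$, and $\pi$ is compatible with these relation graphs: by the definition of $I^g$, every relation $\gamma^{s}\alpha^{t}\in I^g$ projects to $\gamma\alpha\in I$, and conversely each relation $\gamma\alpha\in I$ together with a choice of sign $t$ lifts to a \emph{unique} relation $\gamma^{s}\alpha^{t}\in I^g$ (with $s=t$ if $t(\alpha)\notin Sp$ and $s=-t$ if $t(\alpha)\in Sp$).

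For $(i)\Rightarrow(ii),(iii)$, I would start from a full repetition-free cyclic path $c=\alpha_1\alpha_2\cdots\alpha_n$ in $Q$ through $\alpha_1$ and apply Lemma \ref{lemma 3}: the sequence $\alpha_1^+\alpha_2^{\sigma_2}\cdots\alpha_n^{\sigma_n}$ is a path in $Q^g$ all of whose consecutive products lie in $I^g$, and likewise for $\alpha_1^-\alpha_2^{\tau_2}\cdots\alpha_n^{\tau_n}$. It then remains to see that these lifts close into cycles. Tracking the sign holonomy around $c$, the total sign change equals the parity of the number of special vertices lying on $c$. If $c$ is even, the lift returns to $\alpha_1^+$ after one turn, producing a full cyclic path of length $n$ in $Q^g$ through $\alpha_1^+$ (and a second one of length $n$ through $\alpha_1^-$); it is repetition-free since the $\alpha_i$ are pairwise distinct. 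If $c$ is odd, the lift returns to $\alpha_1^-$ after one turn and only closes after a second turn, producing a single full cyclic path of length $2n$ through both $\alpha_1^+$ and $\alpha_1^-$; it is repetition-free because $\tau_i=-\sigma_i$, so each $\alpha_i$ occurs once as $\alpha_i^+$ and once as $\alpha_i^-$. In all cases the closing relation at the junction $t(\alpha_1)$ lies in $I^g$ by the relation part of Lemma \ref{lemma 3}.

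For $(ii)\Rightarrow(i)$ (and identically $(iii)\Rightarrow(i)$), I would take a full repetition-free cyclic path $d$ in $Q^g$ through $\alpha_1^+$. Projecting along $\pi$, every consecutive relation $\gamma^s\alpha^t\in I^g$ becomes a relation $\gamma\alpha\in I$, so $\pi(d)$ is a closed walk in the relation graph $R$ of $(Q,I)$ passing through $\alpha_1$. Since $R$ has in- and out-degree at most one, any vertex on a closed walk lies on a unique directed cycle of $R$; hence $\alpha_1$ lies on a repetition-free full cyclic path in $Q$, which is precisely $(i)$. Note that $\pi(d)$ itself need not be repetition-free—in the odd case it doubles the underlying cycle—but the degree constraint lets me extract the genuine primitive cycle automatically.

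I expect the main obstacle to be the sign-holonomy bookkeeping in the forward direction, specifically the odd case, where one must check that the length-$2n$ lift closes up, is repetition-free, and satisfies the wrap-around relation in $I^g$; the identity $\tau_i=-\sigma_i$ together with the case analysis of whether $t(\alpha_1)$ is special are what make this go through. A secondary point needing care is the converse, where the projected walk may traverse the base cycle twice; there the real content is that the relation graph of a gentle algebra has in- and out-degree at most one, so that passing to the primitive period is automatic and preserves repetition-freeness.
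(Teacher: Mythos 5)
Your proposal is correct and follows essentially the same route as the paper: the forward direction is the paper's argument verbatim (lift via Lemma \ref{lemma 3}, split by the parity of special vertices on $c$, and double the cycle in the odd case, with $\tau_i=-\sigma_i$ giving repetition-freeness), and your converse is the paper's argument in compact form. Where you invoke the degree-at-most-one property of the relation graph to extract the primitive cycle, the paper runs the identical fact as an explicit induction (from $\alpha_{m+1}=\alpha_1$ and gentleness it forces $\alpha_{m+i}=\alpha_i$, $n=2m$, so the projected walk is the double of a repetition-free cycle), so the two are the same argument phrased differently.
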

\begin{proof}
Recall that $Q_0^g:=\cup_{i\in Q_0}Q_0[i]$ where $Q_0[i]$ is the set $\{i\}$ (resp., $\{i^-,i^+\}$) if the vertex $i$ is special (resp., ordinary), $Q_1^g:=\{\alpha^+,\alpha^-|\alpha\in Q_1\}$,
$$s(\alpha^{\pm}):=\left\{ \begin{array}{cc} s(\alpha)^{\pm}, &\mbox{if } s(\alpha)\notin Sp;\\
s(\alpha), &\mbox{if } s(\alpha)\in Sp,\end{array}
 \right.\quad\quad t(\alpha^{\pm}):=\left\{ \begin{array}{cc} t(\alpha)^{\pm}, &\mbox{if } t(\alpha)\notin Sp;\\
t(\alpha), &\mbox{if } t(\alpha)\in Sp
 \end{array}\right.$$
and $$I^g:=\{\beta^+\alpha^+,\beta^-\alpha^-|\beta\alpha\in I,t(\alpha)\notin Sp\}\bigcup \{\beta^+\alpha^-,\beta^-\alpha^+|\beta\alpha\in I,t(\alpha)\in Sp\}.$$

For $(i)\Rightarrow(ii),(iii)$, let $c=\alpha_1\alpha_2\dots\alpha_n$ be a full repetition-free cyclic path in $Q$ containing $\alpha_1$. We discuss it in the following two cases.

(a) If $c\in \cc^{even}(\Lambda)$, then we claim that $\alpha_1^{+}\alpha_2^{\sigma_2}\dots \alpha_n^{\sigma_n}$ and $\alpha_1^-\alpha_2^{\tau_2}\dots \alpha_n^{\tau_n}$ are full repetition-free cyclic paths.

We only need prove that for $\alpha_1^{+}\alpha_2^{\sigma_2}\dots \alpha_n^{\sigma_n}$, the other is similar. In fact, Lemma \ref{lemma 3} yields that it is an oriented path in $Q^g$.

If $t(\alpha_1)=s(\alpha_n)$ is special, then
$t(\alpha_1^{\pm})=s(\alpha_n^{\pm})=t(\alpha_1)$, so $\alpha_1^{+}\alpha_2^{\sigma_2}\dots \alpha_n^{\sigma_n}$ is a cyclic path. Furthermore,
$\alpha_1\dots\alpha_n$ passes through odd number of special vertices, so $\sigma_n=-$, which implies $\alpha_n^{\sigma_n}\alpha_1^+\in I^g$.
Lemma \ref{lemma 3} also shows that
$$\alpha_1^+\alpha_2^{\sigma_2},\dots,\alpha_{i-1}^{\sigma_{i-1}}\alpha_i^{\sigma_i},\dots,\alpha_{n-1}^{\sigma_{n-1}}\alpha_n^{\sigma_n}\in I^g,$$
and then together with above, we get that $\alpha_1^{+}\alpha_2^{\sigma_2}\dots \alpha_n^{\sigma_n}$ is full repetition-free.

If $t(\alpha)=s(\alpha_n)$ is ordinary, then $t(\alpha^+)=t(\alpha)^+=s(\alpha_n)^+$ and $t(\alpha^-)=t(\alpha)^-=s(\alpha_n)^-$. In particular, $\alpha_1\dots\alpha_n$ passes through even number of special vertices, so $\sigma_n=+$, which implies $\alpha_1^{+}\alpha_2^{\sigma_2}\dots \alpha_n^{\sigma_n}$ is cyclic and $\alpha_n^{\sigma_n}\alpha_1^+\in I^g$.
Similarly, we get that it is also full repetition-free.

(b) If $c\in \cc^{odd}(\Lambda)$, set $l=\alpha_1\dots\alpha_n\alpha_1\dots\alpha_n$, then we claim that
$$p=\alpha_1^{+}\alpha_2^{\sigma_2(l)}\dots \alpha_n^{\sigma_n(l)}\alpha_1^{\sigma_{n+1}(l)}\alpha_{2}^{\sigma_{n+2}(l)}\dots \alpha_{n}^{\sigma_{2n}(l)}$$
is a full repetition-free cyclic path. From $c\in\cc^{odd}(\Lambda)$, it is easy to see that $\sigma_{n+1}(l)=-$ and $\sigma_{n+i}(l)=\tau_i(c)$ for any $2\leq i\leq n$. Thus
$$p=\alpha_1^{+}\alpha_2^{\sigma_2(c)}\dots \alpha_n^{\sigma_n(c)}\alpha_1^{-}\alpha_2^{\tau_2(c)}\dots \alpha_n^{\tau_n(c)}.$$
Similar to (a), we only need check that $s(\alpha_n^{\sigma_{2n}(l)})= t(\alpha_1^+)$ and $\alpha_n^{\sigma_{2n}(l)}\alpha_1^+\in I^g$.
If $s(\alpha_n)$ is special, then $l$ passes through odd number of special vertices, and so $\sigma_{2n}(l)=-$. In this case, we have
$s(\alpha_n^\pm)=t(\alpha_1^\pm)$, and then $\alpha_n^{\sigma_{2n}(l)}\alpha_1^+\in I^g$ by the definition of $I^g$. If $s(\alpha_n)$ is ordinary, then $l$ passes through even number of special vertices, and so $\sigma_{2n}(l)=+$. In this case, we also have
$s(\alpha_n^+)=s(\alpha_n)^+=t(\alpha_1^+)$, and then $\alpha_n^{\sigma_{2n}(l)}\alpha_1^+\in I^g$ by the definition of $I^g$.

For $(ii)\Rightarrow(i)$ and $(iii)\Rightarrow(i)$, let $\alpha_1^{\delta_1}\alpha_2^{\delta_2}\dots\alpha_n^{\delta_n}$ be any full repetition-free cyclic path in $Q^g$, where $\delta_i=+$ or $-$, and $\alpha_i$ is an arrow in $Q$ for any $1\leq i\leq n$. Since $s(\alpha_{i}^{\delta_i})=t(\alpha_{i+1}^{\delta_{i+1}})$ for any $1\leq i\leq n$, where we set $n+1=1$, it is easy to see that $s(\alpha_i)=t(\alpha_{i+1})$. So $\alpha_1\alpha_2\dots\alpha_n$ is cyclic. On the other hand, by the definition of $I^g$, every zero relation in $I^g$ comes from a zero relation in $I$, it is easy to see that $\alpha_i\alpha_{i+1}\in I$ since $\alpha_i^{\delta_i}\alpha_{i+1}^{\delta_{i+1}}\in I^g$ for any $1\leq i\leq n$. If $\alpha_i\neq \alpha_j$ for any $i\neq j$, then $\alpha_1\alpha_2\dots\alpha_n$ is a repetition-free cyclic path.

Otherwise, without losing generality, we assume that $\alpha_{m+1}=\alpha_1$ for some $1<m< n$.
It is easy to see that $\delta_{m+1}\neq \delta_1$ since $\alpha_1^{\delta_1}\alpha_2^{\delta_2}\dots\alpha_n^{\delta_n}$ is repetition-free, which also implies that $m$ is unique. In fact, for any $1\leq i\leq n$, there exists at most one $j$ satisfying $1\leq j\neq i\leq n$ and $\alpha_i=\alpha_j$.
We claim that $\alpha_1\alpha_2\dots\alpha_m$ is a full repetition-free cyclic path.
Since $(Q,I)$ is gentle, for $\alpha_1$, there exists at most one arrow $\beta$ with $t(\beta)=s(\alpha_1)$ such that $\alpha_1\beta\in I$. However, we know that $\alpha_1\alpha_2,\alpha_{m+1}\alpha_{m+2}\in I$, so $\alpha_{m+2}=\alpha_2$. Inductively, we can get that $\alpha_{m+i}=\alpha_{i}$ for $1\leq i\leq m$. Since $\alpha_n\alpha_1\in I$ and $\alpha_{m}\alpha_{m+1}\in I$, we also get that $\alpha_n=\alpha_{m}=\alpha_{2m}$, which yields $\alpha_1\alpha_2\dots\alpha_m$ is cyclic and $\alpha_1\alpha_2\dots\alpha_m=\alpha_{m+1}\alpha_{m+2}\dots\alpha_{2m}$. In fact, we also know $n=2m$. For any $1\leq i\neq j\leq m$, if $\alpha_i=\alpha_j$, then $\alpha_{m+i}=\alpha_i=\alpha_j=\alpha_{m+j}$, which contradicts to $\alpha_1^{\delta_1}\alpha_2^{\delta_2}\dots\alpha_n^{\delta_n}$ is repetition-free. So $\alpha_1\alpha_2\dots\alpha_m$ is repetition-free.
To sum up, we get that $\alpha_1\alpha_2\dots\alpha_m$ is a full repetition-free cyclic path.
\end{proof}

The following theorem is the second main result of this paper.
\begin{theorem}\label{main theorem 2}
Let $(Q,Sp,I)$ be a skewed-gentle triple.
There is an equivalence of triangulated categories
$$D^b_{sg}(KQ^g/\langle I^g\rangle)\simeq (\prod_{c\in\cc^{even}(\Lambda)} \frac{D^b(K)}{[l(c)]})\prod (\prod_{c\in\cc^{even}(\Lambda)} \frac{D^b(K)}{[l(c)]})\prod(\prod_{c\in\cc^{odd}(\Lambda)} \frac{D^b( K)}{[2l(c)]} ),$$
where $\Lambda=KQ/\langle I\rangle$ and $ D^b(K)/[l(c)]$ denotes the triangulated orbit category, see Keller \cite{Ke}.
\end{theorem}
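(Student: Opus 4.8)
The plan is to apply Kalck's theorem (Theorem \ref{theorem Kalck}) to the associated gentle algebra $B=KQ^g/\langle I^g\rangle$, which gives
\[
D^b_{sg}(B)\simeq \prod_{c'\in\cc(B)}\frac{D^b(K)}{[l(c')]},
\]
and then to identify the index set $\cc(B)$ together with its length function $l$ purely in terms of the combinatorial data of $(Q,Sp,I)$. Thus the entire content reduces to producing a length-preserving bijection between $\cc(B)$ and the disjoint union of two copies of $\cc^{even}(\Lambda)$ (with lengths unchanged) and one copy of $\cc^{odd}(\Lambda)$ (with lengths doubled).

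First I would define the correspondence using the explicit lifts constructed in the proof of Lemma \ref{lemma 4}. For an even cycle $c=\alpha_1\dots\alpha_n\in\cc^{even}(\Lambda)$, send $c$ to the two cycles $c^+:=\alpha_1^+\alpha_2^{\sigma_2}\dots\alpha_n^{\sigma_n}$ and $c^-:=\alpha_1^-\alpha_2^{\tau_2}\dots\alpha_n^{\tau_n}$ in $Q^g$; by Lemma \ref{lemma 3} and case (a) of Lemma \ref{lemma 4} both are full repetition-free cyclic, and each has length $l(c)$. For an odd cycle $c\in\cc^{odd}(\Lambda)$, send $c$ to the single cycle $p=\alpha_1^+\alpha_2^{\sigma_2(c)}\dots\alpha_n^{\sigma_n(c)}\alpha_1^-\alpha_2^{\tau_2(c)}\dots\alpha_n^{\tau_n(c)}$ produced in case (b), which is full repetition-free cyclic of length $2l(c)$. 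I would first check that this assignment is well defined on cyclic-equivalence classes: changing the chosen starting arrow of $c$ only cyclically permutes the lifted word, so its class in $\cc(B)$ is unchanged.

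Next I would verify bijectivity. Surjectivity is exactly the implications $(ii)\Rightarrow(i)$ and $(iii)\Rightarrow(i)$ of Lemma \ref{lemma 4}: any $c'\in\cc(B)$, written $\alpha_1^{\delta_1}\dots\alpha_n^{\delta_n}$, has underlying cycle $\alpha_1\dots\alpha_n$ in $Q$; if its arrows are pairwise distinct it is an even cycle whose two lifts are $c^+,c^-$, and if some arrow repeats then, by the gentle condition, $n=2m$, the underlying datum is the odd cycle $\alpha_1\dots\alpha_m$, and $c'$ is its length-$2m$ lift $p$. For injectivity, forgetting signs sends distinct classes of $\cc(\Lambda)$ to distinct underlying cycles, so lifts of different $c$'s never coincide; and within a fixed even $c$ one has $c^+\neq c^-$ in $\cc(B)$, because $\tau_i=-\sigma_i$ for all $i$ forces the sign words to differ at every position while the arrow word is repetition-free, so no cyclic rotation can match them. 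Hence the fibre over each even $c$ has exactly two elements, the fibre over each odd $c$ exactly one, and the two families have disjoint images (an even lift uses each arrow of $Q^g$ at most once, whereas the odd lift uses both $\alpha^+$ and $\alpha^-$).

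The step I expect to be the main obstacle is precisely this bookkeeping of fibres: proving that an even cycle really yields two \emph{distinct} classes in $\cc(B)$ (rather than one class traversed twice) and that an odd cycle yields a single class of doubled length (rather than two). Both hinge on comparing the parity data $\sigma_i,\tau_i$ with the $\pm$-labelling of $Q^g$, which is exactly what Lemma \ref{lemma 4} supplies. Granting the bijection, substituting $l(c^\pm)=l(c)$ for even $c$ and $l(p)=2l(c)$ for odd $c$ into Kalck's product immediately yields
\[
D^b_{sg}(B)\simeq\Big(\prod_{c\in\cc^{even}(\Lambda)}\frac{D^b(K)}{[l(c)]}\Big)\prod\Big(\prod_{c\in\cc^{even}(\Lambda)}\frac{D^b(K)}{[l(c)]}\Big)\prod\Big(\prod_{c\in\cc^{odd}(\Lambda)}\frac{D^b(K)}{[2l(c)]}\Big),
\]
as claimed.
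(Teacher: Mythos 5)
Your proposal is correct and takes essentially the same route as the paper: apply Kalck's Theorem \ref{theorem Kalck} to $KQ^g/\langle I^g\rangle$ and identify $\cc(KQ^g/\langle I^g\rangle)$ through the lifts constructed in Lemma \ref{lemma 4}, with each even cycle contributing two classes of unchanged length and each odd cycle one class of doubled length. Your explicit fibre-counting (distinctness of $c^+$ and $c^-$, disjointness of the even and odd families) is in fact more detailed than the paper's proof, which obtains the same description of $\cc(KQ^g/\langle I^g\rangle)$ by citing Lemma \ref{lemma 4} ``and its proof'' together with the gentleness of $(Q^g,I^g)$.
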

\begin{proof}
Since $(Q^g, I^g)$ is a gentle pair, for any arrow $\alpha^{\delta}\in Q^g$, where $\delta=+$ or $-$, there is at most one full repetition-free cyclic path containing $\alpha^\delta$.
It follows from Lemma \ref{lemma 4} and its proof that
\begin{eqnarray*}
\cc(KQ^g/\langle I^g\rangle)=\{ \alpha_1^{+}\alpha_2^{\sigma_2}\dots \alpha_n^{\sigma_n},\alpha_1^-\alpha_2^{\tau_2}\dots \alpha_n^{\tau_n} |c=\alpha_1\alpha_2\dots \alpha_n\in\cc^{even}(\Lambda)\}\\
\bigcup \{ \alpha_1^{+}\alpha_2^{\sigma_2(c)}\dots \alpha_n^{\sigma_n(c)}\alpha_1^{-}\alpha_2^{\tau_2(c)}\dots \alpha_n^{\tau_n(c)}|c=\alpha_1\alpha_2\dots \alpha_n\in\cc^{odd}(\Lambda)\}.
\end{eqnarray*}
Then Theorem \ref{theorem Kalck} yields the result immediately.
\end{proof}

In general, we do not have $D^b_{sg}(KQ^{sg}/\langle I^{sg}\rangle)\simeq D^b_{sg}(KQ^{g}/I^{g})$. In fact, Theorem \ref{main theorem 2} shows that $D^b_{sg}(KQ^{sg}/\langle I^{sg}\rangle)\simeq D^b_{sg}(KQ^{g}/I^{g})$
if and only if they are zero. So we have the following direct corollary.

\begin{corollary}\label{corollary 2}
Let $(Q,Sp,I)$ be a skewed-gentle triple. Then the following statements are equivalent.

(i) $\gldim KQ^{sg}/\langle I^{sg}\rangle<\infty$,

(ii) $\gldim KQ/\langle I\rangle<\infty$,

(iii) $\gldim KQ^g/\langle I^g\rangle<\infty$.
\end{corollary}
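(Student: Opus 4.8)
The plan is to route all three conditions through the vanishing criterion $D^b_{sg}(\Gamma)=0$ if and only if $\gldim\Gamma<\infty$, and then to read off when each singularity category vanishes from the explicit descriptions already established. The equivalence of (i) and (ii) requires no new work: it is precisely Corollary \ref{corollary 1}, a formal consequence of the singularity equivalence $D^b_{sg}(KQ^{sg}/\langle I^{sg}\rangle)\simeq D^b_{sg}(KQ/\langle I\rangle)$ supplied by Theorem \ref{main theorem}, since singularity-equivalent algebras have vanishing singularity category simultaneously.

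It then remains to link (ii) with (iii), and the strategy is to show that both are equivalent to the single combinatorial condition $\cc(\Lambda)=\emptyset$, where $\Lambda=KQ/\langle I\rangle$. First I would treat (ii). Since $(Q,I)$ is a gentle pair, $\Lambda$ is gentle and Kalck's Theorem \ref{theorem Kalck} applies, giving $D^b_{sg}(\Lambda)\simeq\prod_{c\in\cc(\Lambda)}D^b(K)/[l(c)]$. Each orbit category $D^b(K)/[l(c)]$ is nonzero, so this product is the zero category exactly when the index set is empty; hence $\gldim\Lambda<\infty$ if and only if $\cc(\Lambda)=\emptyset$.

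For (iii), Theorem \ref{main theorem 2} expresses $D^b_{sg}(KQ^g/\langle I^g\rangle)$ as a product of orbit categories indexed by $\cc^{even}(\Lambda)$ (appearing twice) and $\cc^{odd}(\Lambda)$ (once). By the same nonvanishing of each factor, this product is zero if and only if $\cc^{even}(\Lambda)=\emptyset$ and $\cc^{odd}(\Lambda)=\emptyset$. Since every full repetition-free cyclic path in $Q$ carries a well-defined parity of special vertices, one has the disjoint partition $\cc(\Lambda)=\cc^{even}(\Lambda)\sqcup\cc^{odd}(\Lambda)$, so the simultaneous emptiness of both pieces is exactly $\cc(\Lambda)=\emptyset$. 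Thus $\gldim KQ^g/\langle I^g\rangle<\infty$ if and only if $\cc(\Lambda)=\emptyset$, matching the criterion obtained for (ii). Combining the three reductions yields the equivalence of (i), (ii) and (iii).

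The only genuine points to verify, rather than a real obstacle, are two bookkeeping facts that make the argument formal: that each triangulated orbit category $D^b(K)/[l(c)]$ is nonzero, so that ``the product is zero'' means ``the index set is empty'', and that $\cc^{even}(\Lambda)$ and $\cc^{odd}(\Lambda)$ partition $\cc(\Lambda)$. Both are immediate from the definitions, so no hard step is expected; the entire content of the corollary lies in the two main theorems and in Kalck's theorem that precede it.
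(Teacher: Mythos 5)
Your proposal is correct and takes essentially the same route as the paper: (i)$\Leftrightarrow$(ii) is exactly Corollary \ref{corollary 1}, and (ii)$\Leftrightarrow$(iii) is deduced by comparing the product decompositions of Theorem \ref{theorem Kalck} and Theorem \ref{main theorem 2}, both of which vanish precisely when $\cc(\Lambda)=\emptyset$. The paper merely compresses the two bookkeeping facts you spell out (nonvanishing of each orbit category $D^b(K)/[l(c)]$ and the partition $\cc(\Lambda)=\cc^{even}(\Lambda)\sqcup\cc^{odd}(\Lambda)$) into the single assertion that the two singularity categories vanish simultaneously.
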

\begin{proof}
By Corollary \ref{corollary 1}, we only need prove that $(ii)\Leftrightarrow(iii)$. Theorem \ref{main theorem 2} shows that $D^b_{sg}(KQ/\langle I\rangle)\simeq0$ if and only if $D^b_{sg}(KQ^g/\langle I^g\rangle)\simeq0$, which implies that $\gldim KQ/\langle I\rangle<\infty$ if and only if $\gldim KQ^g/\langle I^g\rangle<\infty$.
\end{proof}
\begin{remark}
Note that we do not assume $\Char K\neq 2$ in Corollary \ref{corollary 2}. When $\Char K\neq 2$, we know that $KQ^{sg}/\langle I^{sg}\rangle$ is Morita equivalent to a skew-group algebra $(KQ^g/\langle I^g\rangle)G$ defined in Section 2.3. In this case, \cite[Theorem 1.3]{RR} shows that $\gldim KQ^{sg}/\langle I^{sg}\rangle<\infty$ if and only if $\gldim KQ^{g}/\langle I^g\rangle<\infty$.
\end{remark}

In the following, we denote by $S_n$ the self-injective Nakayama algebra of a cyclic quiver with $n$ vertices modulo the ideal generated by paths of length $2$.

\begin{example}
(a) Keep the notations as in Example \ref{example 2} (a) and Example \ref{example 3} (a). Then $D^b_{sg}(KQ^{sg}/\langle I^{sg}\rangle)\simeq \underline{\mod}(KQ/\langle I\rangle)\simeq \underline{\mod}S_2$, while $D^b_{sg}(KQ^g/\langle I^g\rangle)\simeq \underline{\mod}S_4$.

(b) Keep the notations as in Example \ref{example 2} (b) and Example \ref{example 3} (b). Then $D^b_{sg}(KQ^{sg}/\langle I^{sg}\rangle)\simeq \underline{\mod}(KQ/\langle I\rangle)\simeq \underline{\mod}S_3$, while $D^b_{sg}(KQ^g/\langle I^g\rangle)\simeq \underline{\mod}S_6$.
\end{example}

\end{document}